\newcommand{\margnote}[1]{
\ifthenelse{\boolean{shownotes}}%
{\marginpar{\raggedright\tiny\texttt{#1}}}%
{}%
}
\newcommand{\hole}[1]{
\ifthenelse{\boolean{shownotes}}%
{\begin{center} \fbox{ \rule {.25cm}{0cm}
\rule[-.1cm]{0cm}{.4cm} \parbox{.85\textwidth}{\begin{center}
\texttt{#1}\end{center}} \rule {.25cm}{0cm}}\end{center}}
{}
}
\newtheorem{theorem}{Theorem}[section]
\newtheorem{lemma}[theorem]{Lemma}
\newtheorem{definition}[theorem]{Definition}
\theoremstyle{remark}
\newcommand{\e}{\varepsilon}		       
\newcommand{\R}{\mathbb{R}}
\newcommand{\T}{\mathbb{T}^2}
\newcommand{\ue}{u^{\varepsilon}}
\newcommand{\pe}{p^{\varepsilon}}
\newcommand{\be}{B^{\varepsilon}}
\newcommand{\ee}{E^{\varepsilon}}
\newcommand{\je}{j^{\varepsilon}}
\newcommand{\dive}{\mathop{\mathrm {div}}}
\newcommand{\curl}{\mathop{\mathrm {curl}}}
\newcommand{\pt}{\partial_{t}}
\newcommand{\ptt}{\partial_{tt}}
\newcommand{\uu}{u^{*}_{3}}
\newcommand{\bb}{B^{*}_{3}}
\numberwithin{equation}{section}
\begin{document}

\title[Navier-Stokes-Maxwell equations]{ Vanishing dielectric constant regime for the Navier Stokes Maxwell equations}

\author[Donatelli]{Donatella Donatelli}
\address[D. Donatelli]{\newline
Departement of Engineering Computer Science and Mathematics\\
University of L'Aquila\\
67100 L'Aquila, Italy.}
\email[]{\href{donatella.donatelli@univaq.it}{donatella.donatelli@univaq.it}}
\urladdr{\href{http://univaq.it/~donatell}{univaq.it/\~{}donatell}}

\author[Spirito]{Stefano Spirito}
\address[S. Spirito]{\newline
GSSI - Gran Sasso Science Institute \\ 67100, L'Aquila, Italy}
\email[]{\href{stefano.spirito@gssi.infn.it}{stefano.spirito@gssi.infn.it}}

\subjclass[2010]{Primary: 35Q35, Secondary: 35B25, 35Q30, 35Q61.}

\keywords{Navier-Stokes-Maxwell Equations, Singular Limit, MHD, Higher Order Estimates.}

\begin{abstract}
In this paper we rigorously justify the convergence of smooth solutions of the Navier-Stokes-Maxwell equations towards smooth solutions of the classical $2D$ parabolic MHD equations in the case of vanishing dielectric constant. The result is achieved by means of higher-order energy estimates.

\end{abstract}

\maketitle
\section{Introduction}
The classical Magnetohydrodynamics (MHD) equations for an electrically conducting, non magnetic, viscous incompressible fluid, e.g. plasma fluid,  in $\Omega\times(0,T)$ with $\Omega\subset\R^{d}$ ($d=2,3$) read as follows:
\begin{equation}\label{eq:MHD}
\begin{aligned}
\pt u-\Delta u+(u\cdot\nabla)u+\nabla p-(B\cdot\nabla) B&=0,\\
\pt B-\Delta B+(u\cdot\nabla )B-(B\cdot\nabla)u&=0,\\
\dive B=\dive u&=0,
\end{aligned}
\end{equation} 
where, for simplicity, we set all the physical constants equal to one.
The system \eqref{eq:MHD}, widely studied in literature and used in the applications (see \cite{ST,D}), models the evolution of the velocity $u\in\R^d$, the magnetic field $B\in\R^{d}$ and the scalar pressure $p\in\R$. Moreover, the system is accomplished with initial data, namely 
\begin{equation}\label{eq:ID}
u(x,0)=u_0(x),\,\, B(x,0)=B_0(x)\textrm{ on }\Omega\times\{t=0\},
\end{equation}
and suitable boundary conditions on $\partial\Omega\times(0,T)$.
The model \eqref{eq:MHD} is not the only system of equations used to model this kind of fluids. Another interesting model for plasma fluids is given by the Navier-Stokes-Maxwell system (see \cite{M10}):
\begin{equation}
\label{eq:mns21}
\begin{aligned}
\pt u-\Delta u+(u\cdot\nabla)u+\nabla q&=j\times B,\\ 
\pt E-\curl B&=-j,\\
\pt B+\curl E&=0,\\
\dive u&=0,\\
\dive B&=0,\\
E+(u \times B)&=j,
\end{aligned}
\end{equation}
where $E\in\R^{3}$ is the electric field and $j\in\R^{3}$ is the current density. In the case the domain $\Omega$ is two dimensional the cross products in the equations \eqref{eq:mns21} make sense by considering $u,\ B,\ E$ and $j$ with values in $\R^{3}$. The goal of this paper is to recover in a rigorous way solutions of equations \eqref{eq:MHD} from solutions of equations \eqref{eq:mns21} in a suitable limit process, that as we will see fits in the framework of singular limits. In particular, we  give a  rigorous justification of this  singular limit in the theory of magnetohydrodynamic equations. A similar limit was already considered, see for example \cite{AIM}. Before going into the mathematical details of this limiting process, in the next section we  describe the physical principles that give rise to the models we are considering.

\subsection{Singular limit and Statement of the Main Result}
The system \eqref{eq:MHD} is derived from the Navier-Stokes equations and the Maxwell equations by using the classical continuos mechanics theory and by making, as usual, smallness assumptions in order to simplify the equations taken into account. Specifically, the Maxwell equations for materials which are neither magnetic nor dielectric, are the following (see \cite{D}, \cite{EM1990}):
\begin{equation}\label{eq:Max}
\begin{aligned}
\dive E&=\frac{\rho}{\e_0} &\quad& \text{(Gauss' law)}\\
\dive B&=0 &\quad& \text{(Solenoidal nature of $B$)}\\
\curl E&=-\frac{\partial}{\partial t}B&\quad& \text{(Faraday's law in differential form)}\\
\curl B&=\mu_0\left(j+\e_0\frac{\partial}{\partial t}E\right)&\quad& \text{(Amp\`ere - Maxwell equation)},
\end{aligned}
\end{equation}
in addition we have
\begin{equation*}
\begin{aligned}
j&=\sigma(E+u\times B)&\quad& \text{(current density - Ohm's law)}\\
F&=\rho E+j\times B&\quad& \text{(electrostatic force plus Lorentz force)}.
\end{aligned}
\end{equation*}
Here $\rho$ is the {\em total charge density}, $E$ the {\em total electric field}, $B$ the {\em  magnetic field}, $\e_{0}$ the {\em electric permittivity of free space}, $\mu_{0}$ the {\em permeability of free space} and $\sigma$ the {\em conductivity}.
In MHD equations the Maxwell equations are considerably simplified. First, by assuming the  quasineutrality regime, in $F$ the contribution of the electric force $\rho E$  is small compared with the Lorentz force and then $F$ could be assumed being equal only to $j\times B$. Apparently, $\rho$ plays a role only in the Gauss' law, then we simply drop it. At this point we are left with the following form of the  Maxwell equations 
\begin{equation}
\label{eq:Max2}  
\begin{aligned}
\dive B&=0\\
\curl E&=-\frac{\partial}{\partial t}B\\
\curl B&=\mu_0\left(j+\e_0\frac{\partial}{\partial t}E\right)
\end{aligned}
\end{equation}
and the relations 
\begin{equation*}
\begin{aligned}
j&=\sigma(E+u\times B)\\
F&=j\times B.
\end{aligned}
\end{equation*}

If we  set $\sigma=1$, by using \eqref{eq:Max2} we derive the Navier-Stokes-Maxwell system:
\begin{equation}
\label{eq:mns1}
\begin{aligned}
\pt u-\Delta u+(u\cdot\nabla)u+\nabla q&=j\times B \\ 
\mu_0\e_0\pt E-\curl B&=-\mu_0j\\
\pt B+\curl E&=0\\
\dive u&=0\\
\dive B&=0\\
E+(u\times B)&=j.
\end{aligned}
\end{equation}
The last assumption in the MHD regime  is that the displacement of the currents $\mu_0\e_{0}\partial E/\partial t$ is negligible. 
Indeed  in  a typical conductor the characteristic velocity  is much smaller than the speed of the light then, the displacement of the currents can be considered small. This can be seen more clearly with a simple scaling argument. In order to get a somewhat deeper insight into the structure of possible solutions, we can identify characteristic values of relevant physical quantities: the reference time $t_{ref}$, the reference length $L_{ref}$,  the reference velocity $u_{ref}$, and the characteristic values of other composed quantities $q_{ref}$, $B_{ref}$, $E_{ref}$, $j_{ref}$. Introducing new independent and dependent variables $X' = X/X_{ref}$, omitting the primes in the resulting equations and recalling that $\mu_0\e_0=c^{-2}$, where $c$ is the speed of light, we get the following dimensionless form of the Amp\'ere - Maxwell equation
 \begin{equation}\label{eq:ampere}
\left(\frac{u_{ref}}{c}\right)^2\partial_t E-\curl B=-\bar{\nu}j
\end{equation} 
with $\bar{\nu}$ being a dimensionless constant. Then, the displacement of the current is negligible because the characteristic velocity of the fluid is much smaller than the velocity of the light. Setting $\e=\left(\frac{u_{ref}}{c}\right)^2$ we have the following $\e$-dependent dimensionless version of the Navier-Stokes-Mawell system \begin{equation}
\label{eq:mns}
\begin{aligned}
\pt\ue-\Delta\ue+(\ue\cdot\nabla)\ue+\nabla q^{\e}&=\je\times\be\\ 
\e\pt\ee-\curl\be&=-\je\\
\pt\be+\curl\ee&=0\\
\dive\ue&=0\\
\dive\be&=0\\
\ee+(\ue\times\be)&=\je
\end{aligned}
\end{equation}
supplemented with the following initial data
\begin{equation}
\ue(x,0)=\ue_0(x) \quad \be(x,0)=\be_{0}(x) \quad \ee(x,0)=\ee_{0}(x).
\label{eq:id}
\end{equation}
At a formal level we can see, that as $\e\to 0$ we have that the Amp\`ere - Maxwell equation reduces to the  Amp\`ere's  law
 \begin{equation}\label{eq:preamp}
\curl B=j
\end{equation}
Then, if we combine Ohm's law, Amp\`ere's  law with the Faraday's law  we get the following equations for the magnetic field  
\begin{equation}\label{eq:ind}
\pt B-\curl\curl B-\curl(u\times B)=0.
\end{equation}
and, concerning the equations for the velocity field ,by using \eqref{eq:preamp} we get 
\begin{equation}\label{eq:velform}
\pt u-\Delta u+(u\cdot\nabla)u+\nabla p=\curl B\times B.
\end{equation}
Then, by classical vector identities \eqref{eq:ind} is exactly the equations for the magnetic field in \eqref{eq:MHD} and, up to redefine the pressure, \eqref{eq:velform} is the equations for $u$ in \eqref{eq:MHD}.\\

 In this paper we rigorously justify the above formal limit in the case of $\Omega$ being the two dimensional torus. Our main theorem can be stated as follows.
\begin{theorem}\label{teo:main}
Let $\Omega=\T$  and $T>0$, $s>3$. Let $(u_0, B_0)\in H^{s}(\T;\R^2)$ be divergence-free vector field . Let $(u,B)\in C([0,T);H^{s}(\T;\R^2))$ be the unique smooth solution of the Cauchy problem \eqref{eq:MHD}-\eqref{eq:ID}. Then, there exist $\bar{\e}>0$ and $\ue_0$, $\be_0$ and  $\ee_0$ in $H^{s}(\T;\R^3)$ such that for any $\e<\bar{\e}$ the unique smooth solutions $\ue$, $\be$ and $\ee$ of \eqref{eq:mns}-\eqref{eq:id} satisfy:  
\begin{equation}\label{eq:convergecemain}
\begin{aligned}
&\ue\rightarrow u\textrm{ weakly$^{*}$ in }C([0,T); H^{1}(\T;\R^3)),\\
&\be\rightarrow B\textrm{ weakly$^{*}$ in }C([0,T); H^{1}(\T,\R^3)),
\end{aligned}
\end{equation}
where $u$ and $B$ are considered as three dimensional vector with vanishing third component. 
\end{theorem}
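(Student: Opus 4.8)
\emph{Setting up the perturbation.} The plan is to reformulate the limit as a stability estimate for the $\e$-problem around an explicit approximate solution built from the smooth solution $(u,B)$ of \eqref{eq:MHD}. I would set $j^{*}:=\curl B$ and $E^{*}:=\curl B-u\times B$; by the vector identities recalled in the introduction, and after the usual redefinition of the pressure, $(u,B,E^{*})$ solves \eqref{eq:mns} \emph{exactly} except for an additional forcing $\e\,\pt E^{*}$ on the right-hand side of the Amp\`ere--Maxwell equation. Note that $E^{*}\in C([0,T);H^{s-1})$, $\curl E^{*}=-\pt B\in C([0,T);H^{s-2})$ and $\pt E^{*}\in C([0,T);H^{s-3})$: three spatial derivatives are lost, two through the Laplacian in the induction equation and one more through the curl in Ohm's law. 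I would then prescribe well prepared data $\ue_{0}:=u_{0}$, $\be_{0}:=B_{0}$, and $\ee_{0}$ an $H^{s}(\T;\R^{3})$ regularisation of $E^{*}|_{t=0}=\curl B_{0}-u_{0}\times B_{0}$ with $\|\ee_{0}-E^{*}|_{t=0}\|_{L^{2}}\to0$ and $\sqrt\e\,\|\ee_{0}\|_{H^{s}}\to0$ as $\e\to0$.

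\emph{The remainder system.} Writing $w^{\e}:=\ue-u$, $D^{\e}:=\be-B$, $G^{\e}:=\ee-E^{*}$ and subtracting the two systems, $w^{\e}$ solves a forced Navier--Stokes equation with $\dive w^{\e}=0$, while
\[
\pt D^{\e}+\curl G^{\e}=0,\qquad \e\,\pt G^{\e}-\curl D^{\e}+G^{\e}=-\e\,\pt E^{*}-\bigl(w^{\e}\times\be+u\times D^{\e}\bigr),\qquad \dive D^{\e}=0.
\]
All the forcing terms in the $w^{\e}$- and $G^{\e}$-equations are either bilinear in $(w^{\e},D^{\e},G^{\e})$ with coefficients built from the smooth fields $u,B,E^{*}$, or quadratic in $(w^{\e},D^{\e},G^{\e})$, apart from the single singular forcing $-\e\,\pt E^{*}$ in the Amp\`ere equation.

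\emph{Higher-order energy estimates.} The heart of the proof is an $\e$-uniform estimate for a quantity of the form $\mathcal E^{\e}(t)\simeq\|w^{\e}\|_{H^{k}}^{2}+\|D^{\e}\|_{H^{k}}^{2}+\e\|G^{\e}\|_{H^{k}}^{2}$ (at a Sobolev level $k$ allowed by the regularity $s$), resting on four structural facts: (i) the viscosity $-\Delta w^{\e}$ provides the dissipation $\|w^{\e}\|_{H^{k+1}}^{2}$, which in two dimensions absorbs the ``one-derivative-too-many'' parts of the convective and Lorentz nonlinearities via commutator/Kato--Ponce and Sobolev product estimates; (ii) Ohm's law produces the damping $\|G^{\e}\|_{H^{k}}^{2}$; (iii) the coupling $D^{\e}\!\leftrightarrow\!G^{\e}$ through $\pm\curl$ is skew-symmetric, so the curl terms cancel when the two energies are added and no derivative is lost in the Maxwell block; (iv) as $\e\to0$ that block degenerates to the heat operator $\pt D^{\e}-\Delta D^{\e}=\curl(\text{source})$, so a modulated energy of the form $\|D^{\e}\|_{H^{k}}^{2}-2\e\,\langle G^{\e},\curl D^{\e}\rangle_{H^{k}}$, equivalent to $\|D^{\e}\|_{H^{k}}^{2}$ for $\e$ small, furnishes an \emph{effective} dissipation $\|\nabla D^{\e}\|_{H^{k}}^{2}$. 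In the estimate the quadratic-in-remainder terms carry a factor $\lesssim\sqrt{\mathcal E^{\e}}$ and are harmless under a smallness hypothesis; the bilinear terms with smooth coefficients contribute $C(t)\,\mathcal E^{\e}$ with $C\in L^{1}_{loc}([0,T))$ — here one exploits the $\e$-uniform basic energy bound $\|\ue\|_{L^{2}}^{2}+\|\be\|_{L^{2}}^{2}+\e\|\ee\|_{L^{2}}^{2}\le\|\ue_{0}\|_{L^{2}}^{2}+\|\be_{0}\|_{L^{2}}^{2}+\e\|\ee_{0}\|_{L^{2}}^{2}$ (which comes from the cancellation between the fluid and electromagnetic nonlinearities) together with the two-dimensional log-type Sobolev inequalities to control the high norms of $\ue,\be$ themselves. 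One is then led to
\[
\frac{d}{dt}\mathcal E^{\e}+c\bigl(\|w^{\e}\|_{H^{k+1}}^{2}+\|\nabla D^{\e}\|_{H^{k}}^{2}+\|G^{\e}\|_{H^{k}}^{2}\bigr)\le C(t)\,\mathcal E^{\e}+C\,\e^{2},
\]
whence $\mathcal E^{\e}(t)\lesssim(\mathcal E^{\e}(0)+\e^{2})\exp\int_{0}^{t}C\lesssim\e$ on each $[0,T']$ with $T'<T$, by Gronwall; a bootstrap/continuation argument — choosing $\bar\e$ so small that the smallness hypothesis is self-improving and the local solution of \eqref{eq:mns}-\eqref{eq:id} can be continued past $T'$, the quantities in its continuation criterion being now controlled — yields the bound on all of $[0,T)$. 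From $\|w^{\e}\|_{L^{\infty}_{t}H^{k}}+\|D^{\e}\|_{L^{\infty}_{t}H^{k}}\to0$, interpolated against the $\e$-uniform $H^{s}$ bounds if necessary, one gets $\ue\to u$, $\be\to B$ strongly in $C([0,T);H^{1}(\T;\R^{3}))$, which in particular gives the weak-$*$ convergence asserted in Theorem~\ref{teo:main} (with $u,B$ regarded as three-dimensional fields with zero third component).

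\emph{The main obstacle.} The one term that does not submit to a routine treatment is the singular forcing $-\e\,\pt E^{*}$: since $\pt E^{*}$ only belongs to $H^{s-3}$, estimating $\e\int\partial^{\alpha}\pt E^{*}\cdot\partial^{\alpha}G^{\e}$ at the Sobolev level of the energy in the obvious way would require $\pt E^{*}$ to be more regular than $s>3$ affords. To run the argument with the stated regularity one must never let derivatives fall on $\pt E^{*}$: writing $\pt E^{*}=\curl\pt B-\pt u\times B-u\times\pt B$ and using the Faraday identity $\curl G^{\e}=-\pt D^{\e}$, one transfers the curl onto $D^{\e}$ and integrates by parts in time (and once more in space for the second-order pieces that appear), so that the dangerous contributions of $\e\,\pt E^{*}$ are only ever paired with $\curl D^{\e}$, absorbed by the effective dissipation (iv), or, through boundary terms, with $D^{\e}$ and $\nabla D^{\e}$ carrying a factor $\e$ or $\sqrt\e$, absorbed by $\mathcal E^{\e}$ and its dissipation — with only $\pt B\in H^{s-2}$ and $\pt E^{*}\in L^{2}$, hence only $s>3$, ever being used, and the net contribution being $O(\e^{2})$. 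I expect this estimate, together with establishing and closing the modulated-energy structure (iv) and threading the uniform-in-$\e$ bounds through the continuation argument, to be the main difficulty; the remainder of the proof is a (lengthy) exercise in commutator and product estimates and bootstrap bookkeeping.
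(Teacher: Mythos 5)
Your strategy is genuinely different from the paper's. You construct the approximate solution $(u,B,E^{*})$ with $E^{*}=\curl B-u\times B$ and run a relative (modulated) energy estimate on the remainder $(w^{\e},D^{\e},G^{\e})$; the paper instead proves $\e$-uniform a priori bounds directly on $(\ue,\be,\ee)$ (Lemmas \ref{lem:e1} and \ref{lem:e2}), conditional on $\|\ue\|_{\infty}+\|\be\|_{\infty}\le C/\sqrt{\e}$, closes that condition by a continuation argument based on the Brezis--Gallouet inequality \cite{BG}, and then passes to the limit by weak compactness, identifying the limit through the uniqueness of weak solutions of \eqref{eq:MHD}. Your route, if completed, would be stronger --- strong convergence with a rate $O(\sqrt{\e})$ --- and is precisely the ``modulated energy argument as in \cite{BNP04}'' that the authors list among the open extensions. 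Your algebra is correct: $(u,B,E^{*})$ solves \eqref{eq:mns} exactly up to the forcing $\e\pt E^{*}$ in the Amp\`ere equation, and the skew-symmetry of the $\pm\curl$ coupling together with the Ohm damping is the right structure to exploit.

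There are, however, two genuine gaps. First, the singular forcing, which you rightly flag as the crux, is not actually disposed of by the fix you sketch when only $s>3$ is assumed. At energy level $k\ge 1$ the worst piece is $\e\int\partial^{\alpha}\curl\pt B\cdot\partial^{\alpha}G^{\e}$ with $|\alpha|=k$; transferring the curl and using $\curl G^{\e}=-\pt D^{\e}$ turns it into $-\e\frac{d}{dt}\int\partial^{\alpha}\pt B\cdot\partial^{\alpha}D^{\e}+\e\int\partial^{\alpha}\ptt B\cdot\partial^{\alpha}D^{\e}$. The first piece is harmless, but $\ptt B$ lies only in $H^{s-4}$, a negative-order space for $3<s\le 4$, so the second piece must be paired with $D^{\e}$ at regularity $4+2k-s$, which exceeds the $H^{k+1}$ control furnished by the effective dissipation $\|\nabla D^{\e}\|_{H^{k}}^{2}$ unless $k\le s-3$; sending the derivatives the other way lands on $\Delta G^{\e}$, which the damping $\|G^{\e}\|_{H^{k}}^{2}$ does not see. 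So your bookkeeping claim that only $\pt B\in H^{s-2}$ and $\pt E^{*}\in L^{2}$ are ever used is not substantiated, and the estimate can only be expected to close at level $k\le s-3<1$, where $\|w^{\e}\|_{\infty}$ and $\|D^{\e}\|_{\infty}$ are not controlled in two dimensions. Second, and consequently, the quadratic remainder terms (for instance $\int(w^{\e}\times\be)\cdot G^{\e}$ and $\int((u\times D^{\e})\times\be)\cdot w^{\e}$) require $\e$-uniform $H^{1}$ or $L^{\infty}$ control of $\ue,\be$ themselves; you invoke ``two-dimensional log-type Sobolev inequalities'' for this, but that is exactly where the paper's real work lies --- the conditional estimates with threshold $C/\sqrt{\e}$ and the Brezis--Gallouet contradiction argument of Step 2 --- and your proposal assumes the output of that argument rather than reproducing it. A viable repair is to run the relative energy only at the $L^{2}$ level, where $\e\pt E^{*}$ is absorbed trivially by the Ohm damping, but then the uniform $H^{1}$ bounds must be imported from the paper's Section \ref{sec:est}, and your argument becomes an attractive replacement for Steps 3--4 rather than a self-contained proof.
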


\subsection{Different interpretations of the limit}
This type of limit may have different interpretations according to the different  approaches. 
In particular it may be considered also in the context of the hydrodynamical limits of Vlasov-Maxwell equations or  in the framework of hyperbolic to parabolic relaxation theory. In fact in the paper \cite{JM12}, the authors perform a formal analysis for the hydrodynamical limit from a two- species Vlasov-Maxwell- Boltzmann equations in the regime of $\e_0$ small. In particular they consider the following form of  the scaled Vlasov-Maxwell Boltzmann system describing the dynamics of charged dilute particles,
\begin{equation}
\label{VB1}
\begin{aligned}
&\e\partial _{t}F^{\e}+v\cdot\nabla_{x}F^{\e}+(\e E^{\e}+v\times B^{\e})\cdot\nabla_{v}G^{\e}=\frac{1}{\e} Q(F^{\e}, F^{\e}),\\
&\e\partial _{t}G^{\e}+v\cdot\nabla_{x}G^{\e}+\left(\frac{E^{\e}}{\e}+\frac{v\times B^{\e}}{\e}\right)\cdot\nabla_{v}F^{\e}=\frac{1}{\e} Q(G^{\e}, F^{\e}),\\
&\e\partial_{t}E^{\e}-\nabla\times B^{\e}=-\int_{\R^{3}}vG^{\e} dv,\quad \nabla\cdot B^{\e}=0,\\
&\partial_{t}B^{\e}-\nabla\times E^{\e}, \quad \nabla\cdot E^{\e}=\frac{1}{\e}\int_{\R^{3}}G^{\e} dv,
\end{aligned}
\end{equation}
where $\e=\e_{0}$, $x$ is the position, $v$ the velocity, $F^{\e}$ is the total mass density, $G^{\e}$ the total charge density, $(E^{\e}, B^{\e})$ the electromagnetic field. Formally, as $\e\to 0$ one can recover the system \eqref{eq:MHD}, for details see Theorem 3.2 in  \cite{JM12}.
Finally, we want  to remark that the previous limit is also interesting from the point of view of the hyperbolic-parabolic relaxation limit since the system \eqref{eq:mns1} can be seen as the relaxed version of the system \eqref{eq:MHD}. In fact, let us consider the following system

\begin{equation}
\label{eq:mns2}
\begin{aligned}
\pt u-\Delta u+(u\cdot\nabla)u+\nabla q&=j\times B\\ 
\pt E-\curl B&=-j\\
\pt B+\curl E&=0\\
\dive u&=0\\
\dive B&=0\\
E+(u \times B)&=j.
\end{aligned}
\end{equation}

We perform now, the following diffusive scaling, namely for any $\e>0$ we set
\begin{equation}
\begin{aligned}
\ue(x,t)=\frac{1}{\sqrt{\e}}u\left(\frac{x}{\sqrt{\e}},\frac{t}{\e}\right) &\qquad \be=\frac{1}{\sqrt{\e}}B\left(\frac{x}{\sqrt{\e}},\frac{t}{\e}\right),\\
\ee=\frac{1}{\e}E\left(\frac{x}{\sqrt{\e}},\frac{t}{\e}\right) \qquad \je&=\frac{1}{\e}j\left(\frac{x}{\sqrt{\e}},\frac{t}{\e}\right) \qquad  q^{\e}=\frac{1}{\e}q\left(\frac{x}{\sqrt{\e}},\frac{t}{\e}\right).
\end{aligned}
\label{eq:mns3}
\end{equation}
With the previous scaling the system \eqref{eq:mns2} assumes the form \eqref{eq:mns1} and, as $\e\to 0$, at a  formal level we get the MHD equations. Let us recall that the diffusive scaling \eqref{eq:mns3} has been widely investigated in the analysis of hyperbolic-parabolic relaxation limits for weak solutions of an hyperbolic system with strongly diffusive terms, see \cite{MR00}, \cite{DM10}, \cite{DL09}, \cite{BNP04}.
For a general overview of the theory of the singular limits see the survey \cite{DM02}  and the paper \cite{DM04}, where the theory is completely set up.

\subsection{Final Remarks and Plan of the paper}
 We want to conclude this  Introduction by making some comments and pointing out some open questions. 
\begin{itemize}
\item The regularity of the initial data can be clearly relaxed. 
\item An extension of this result in the whole space should be only technical. However in the case of a bounded domain with no-slip boundary conditions the  proof of Theorem \ref{teo:main} does not work.
\item It could be possible to obtain a rate of convergence for the $(\ue, \be)$ by using a modulated energy argument as in \cite{BNP04}. 
\item A very interesting problem would be the convergence in the topology of the initial data globally in time in two dimension and locally in time in three dimension. 
\item Concerning the three dimensional case, we strongly believe that this type of limit works in the case of  small initial data for the \eqref{eq:MHD}.
\item A very interesting open problem is the convergence on three dimension in the energy space. 
\end{itemize}

Finally, we mention that similar singular limits have been considered in three space dimension, in the framework of  compressible magnetohydrodynamic equations  under the assumption of well prepared  initial data and  smooth solutions of the target system   by employing classical nonlinear energy method, see  \cite{JL2012}, \cite{KS1986i}, \cite{KS1986ii}.

The plan of the paper is  as follows. In Section 2 we collect all the definitions and the technical results we are going to use through the paper. In Section 3 we recover the a priori estimates necessary to prove our main result Theorem \ref{teo:main}. Finally, Section 4 is devoted to the proof of the Theorem \ref{teo:main}.
\section{Preliminares}
We briefly fix the notation, which is typical of space-periodic
problems. In the sequel we shall use the customary Lebesgue spaces
$L^p(\Omega)$ and Sobolev spaces $W^{k,p}(\Omega)$ and
$H^s(\Omega):=W^{s,2}(\Omega)$, with $\Omega := ]0,2\pi[^2$; for
simplicity we shall do not distinguish between scalar and vector
valued functions. Since we shall work with periodic boundary
conditions the spaces are made of periodic functions and in the
Hilbertian case $p=2$ we can easily characterize them by using Fourier Series on the 2D torus. 
We use
$\|\cdot\|_p$ to denote the $L^{p}(\T)$ norm and we impose the zero
mean condition and on velocity, the pressure and the magnetic field. We will denote by $H^{s}(\T)$, $s=1, 2$, the classical Sobolev spaces. Moreover, $L^p(0,T;X)$ denotes  the classical Bochner spaces endowed with
the norm
\begin{equation*}
  \|f\|_{L^p(0,T;X)}:=
  \left\{\begin{aligned}
      &\left(\int_0^T\|f(t)\|_X\right)^{1/p}\quad &\text{if }1\leq p<\infty,
      \\
      &\sup_{0\leq t\leq T}\|f(t)\|_X &\text{if }p=+\infty,
\end{aligned}
\right.
\end{equation*}
Since we assumed divergence-free condition and zero average for $u$ and $B$ on $\T$ the following  norm equivalences hold,
\begin{equation*}
\begin{aligned}
&\|u\|_{H^{2}}\cong\|\Delta u\|_2,\qquad&\|u\|_{H^{1}}\cong\|\nabla u\|_{2},\\
&\|B\|_{H^{2}}\cong\|\Delta B\|_2,\qquad&\|B\|_{H^{1}}\cong\|\nabla B\|_{2}.
\end{aligned}
\end{equation*} 
We will use also the following standard inequalities:
\begin{itemize}
\item The Gagliardo-Nirenberg inequality, namely
\begin{equation}\label{eq:GN}
\|f\|_{p}\leq C\|\nabla f\|_{r}^{\alpha}\|f\|_{q}^{1-\alpha},
\end{equation}
where
\begin{equation*}
\frac{1}{p}=\left(\frac{1}{r}-\frac{1}{2}\right)\alpha+\frac{1-\alpha}{q}
\end{equation*}
and $\alpha\in[0,1]$.
\item The Kato-Ponce inequality, namely 
\begin{equation}\label{eq:KP}
\|fg\|_{H^s}\leq C(\|f\|_{\infty}\|g\|_{H^s}+\|g\|_{\infty}\|f\|_{H^s})
\end{equation}
which holds for any $s>0$. 
\item The Brezis-Gallouet inequality (see Lemma 2 in \cite{BG})
\begin{equation}\label{eq:BG}
\|f\|_{\infty}\leq C\|f\|_{H^{1}}(1+(\ln(1+\|f\|_{H^{2}}))^{\frac{1}{2}})
\end{equation}
which holds for any $u\in H^{2}$. 
\end{itemize}
Now, we recall some important results concerning the equations \eqref{eq:MHD}. Let us start with the definition of weak solutions for the Cauchy problem \eqref{eq:MHD}-\eqref{eq:ID}. 
\begin{definition}\label{def:ws}
A pair $(u,B)$ is a weak solution of the Cauchy problem \eqref{eq:MHD}-\eqref{eq:ID} if 
\begin{equation*}
u, B\in C([0,T);L^2_{weak}(\T;\R^2))\cap L^{\infty}((0,T);L^{2}(\T;\R^2)\cap L^{2}((0,T);H^{1}(\T;\R^2))\\
\end{equation*}
and the equations \eqref{eq:MHD} are satisfied in the sense of distribution for any divergence-free test function belonging to the space $C_{c}^{\infty}([0,T);C^{\infty}_{per}(\T;\R^2))$. 
\end{definition}
The following global regularity and uniqueness theorem has been proved in \cite{ST}. 
\begin{theorem}\label{teo:exMHD}
Let $s>3$ and $u_0, B_0\in H^{s}(\T;\R^2)$.  There exists a unique global smooth solution $(u,B)$ of the Cauchy problem \eqref{eq:MHD}-\eqref{eq:ID} 
 such that:
\begin{equation*}
\begin{aligned}
u\in C([0,T); H^{s}(\T;\R^2)),\\
B\in C([0,T); H^{s}(\T;\R^2)).
\end{aligned}
\end{equation*}
Moreover, $(u,B)$ is also unique in the class of weak solutions in the sense of Definition \ref{def:ws}.
\end{theorem}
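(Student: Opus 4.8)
\emph{Strategy and local theory.} This is the classical global regularity and uniqueness result of \cite{ST}; I would argue as follows. A Galerkin (or mollification) scheme, in which the nonlinear terms are controlled in $H^{s}$ by the Kato--Ponce inequality \eqref{eq:KP} and the embedding $H^{s}\hookrightarrow L^{\infty}$, yields a unique maximal solution $(u,B)\in C([0,T^{*});H^{s})\cap L^{2}_{\mathrm{loc}}(0,T^{*};H^{s+1})$, together with the Beale--Kato--Majda-type alternative that $T^{*}<\infty$ forces $\int_{0}^{T^{*}}(\|\nabla u\|_{\infty}+\|\nabla B\|_{\infty})\,dt=+\infty$; the divergence-free and zero-mean conditions are propagated by \eqref{eq:MHD}. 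Since in dimension two $H^{s}\hookrightarrow W^{1,\infty}$ for $s>2$, it is enough to bound $\|u(t)\|_{H^{s}}+\|B(t)\|_{H^{s}}$ on every finite interval $[0,T]$ in order to conclude that $T^{*}=+\infty$. Uniqueness among smooth solutions is part of this local theory, so the substantive points are the \emph{a priori} bound and uniqueness in the weak class.

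\emph{Global a priori estimates in two dimensions.} Testing the momentum equation of \eqref{eq:MHD} with $u$ and the induction equation with $B$, the transport terms vanish by incompressibility and the two coupling terms cancel via $\int(B\cdot\nabla)B\cdot u=-\int(B\cdot\nabla)u\cdot B$, giving
\begin{equation*}
\tfrac12\tfrac{d}{dt}\big(\|u\|_{2}^{2}+\|B\|_{2}^{2}\big)+\nu\|\nabla u\|_{2}^{2}+\mu\|\nabla B\|_{2}^{2}=0 ,
\end{equation*}
hence $u,B\in L^{\infty}(0,\infty;L^{2})\cap L^{2}(0,\infty;H^{1})$ with norms bounded by the data. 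For the $H^{1}$ estimate I would pass to the vorticity--current formulation and test it with $(\curl u,\curl B)$: the coupling terms combine into $\int B\cdot\nabla(\curl u\,\curl B)=0$, and the only surviving right-hand side is a term bilinear in $\nabla u$ and $\nabla B$, which one estimates by Ladyzhenskaya's inequality $\|f\|_{4}^{2}\le C\|f\|_{2}\|\nabla f\|_{2}$, \eqref{eq:GN} and Young's inequality, absorbing the top-order factors into the dissipation; this gives
\begin{equation*}
\tfrac{d}{dt}\big(\|\nabla u\|_{2}^{2}+\|\nabla B\|_{2}^{2}\big)+\nu\|\Delta u\|_{2}^{2}+\mu\|\Delta B\|_{2}^{2}\le g(t)\big(\|\nabla u\|_{2}^{2}+\|\nabla B\|_{2}^{2}\big),
\end{equation*}
with $g\in L^{1}(0,\infty)$ (indeed $g\lesssim\|\nabla u\|_{2}^{2}+\|\nabla B\|_{2}^{2}$), so Gr\"onwall yields $u,B\in L^{\infty}(0,T;H^{1})\cap L^{2}(0,T;H^{2})$ on every $[0,T]$. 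Starting from this, I would apply $\partial^{\alpha}$, $|\alpha|\le s$, to \eqref{eq:MHD}, test with $\partial^{\alpha}u$ and $\partial^{\alpha}B$, and control the commutators through \eqref{eq:KP} and \eqref{eq:GN}; because there is diffusion on \emph{both} fields, the commutators at level $k$ can be bounded using the $L^{2}(0,T;H^{k+1})$ information produced at the previous level, staying strictly below the critical two-dimensional Sobolev embedding. This propagates the bound successively from $H^{1}$ to $H^{2},\dots,H^{s}$ and yields $\sup_{[0,T]}(\|u\|_{H^{s}}+\|B\|_{H^{s}})<\infty$; by the continuation criterion the solution is global and $(u,B)\in C([0,T];H^{s})$ for all $T$.

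\emph{Uniqueness in the weak class.} Let $(\tilde u,\tilde B)$ be a weak solution in the sense of Definition~\ref{def:ws} with the same data. In dimension two, Ladyzhenskaya's inequality gives $\tilde u,\tilde B\in L^{4}(0,T;L^{4})$, which renders all the quadratic terms integrable and lets one establish the energy equality for $(\tilde u,\tilde B)$. Comparing with the smooth solution $(u,B)$ by a standard relative-energy computation — using the weak formulation of $(\tilde u,\tilde B)$ with test field $(u,B)$ together with the equations for $(u,B)$ — the difference $(w,b)=(u-\tilde u,B-\tilde B)$ satisfies
\begin{equation*}
\tfrac12\tfrac{d}{dt}\big(\|w\|_{2}^{2}+\|b\|_{2}^{2}\big)+\nu\|\nabla w\|_{2}^{2}+\mu\|\nabla b\|_{2}^{2}\le C\big(\|\nabla u\|_{\infty}+\|\nabla B\|_{\infty}\big)\big(\|w\|_{2}^{2}+\|b\|_{2}^{2}\big),
\end{equation*}
because every nonlinear difference is either of the form $\int(w\cdot\nabla)u\cdot w$ (and its analogues), or bilinear in $(w,b)$ with coefficient a first derivative of the \emph{smooth} solution, which lies in $L^{\infty}$ by the previous step. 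Since $w(0)=b(0)=0$ and $t\mapsto\|\nabla u(t)\|_{\infty}+\|\nabla B(t)\|_{\infty}$ is in $L^{1}(0,T)$, Gr\"onwall forces $(\tilde u,\tilde B)=(u,B)$.

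\emph{Main obstacle.} The real work is the global-in-time higher-order estimate in the second step: propagating the global $H^{1}$ bound all the way up to $H^{s}$. This succeeds because of the cancellation in the vorticity--current system and because, with diffusion acting on both $u$ and $B$, the remaining commutator terms can be interpolated below the borderline two-dimensional Sobolev embedding — which is exactly where two-dimensionality is essential (and why, in three dimensions, one expects such a statement only locally in time or for small data). Local existence and the weak--strong comparison are then routine, granted this bound and the $L^{4}_{t,x}$ regularity of two-dimensional weak solutions.
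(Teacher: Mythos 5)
The paper offers no proof of this theorem: it is quoted directly from Sermange--Temam \cite{ST}, so there is no internal argument to compare against. Your sketch reproduces the standard route of that reference --- cancellation of the coupling terms in the $L^{2}$ energy estimate, the vorticity--current formulation with Ladyzhenskaya interpolation for the global two-dimensional $H^{1}$ bound, a commutator bootstrap up to $H^{s}$, and a Lions--Prodi-type weak--strong comparison using the $L^{4}_{t,x}$ regularity of energy-class solutions --- and I see no gap in it at the level of detail given.
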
  
Concerning the Navier-Stokes-Maxwell system the global existence of smooth solutions has been proved in \cite{M10}. 
\begin{theorem}\label{teo:exmsn}
Let $s>3$ and $u^{\e}_0$, $B^{\e}_0$and  $E_0^{\e}$ be in $H^{s}(\T;\R^3)$, with $u^{\e}_0$ and $B_0^{\e}$ divergence-free.
Let $\e>0$ fixed and arbitrary.Then, there exists a unique global smooth solution $(\ue,\be,\ee)$ of the Cauchy problem \eqref{eq:mns}-\eqref{eq:id} with 
\begin{equation*}
\begin{aligned}
\ue\in C([0,T); H^{s}(\T;\R^3)),\\
\be\in C([0,T); H^{s}(\T;\R^3)),\\
\ee\in C([0,T);H^{s}(\T;\R^3)).
\end{aligned}
\end{equation*}
\end{theorem}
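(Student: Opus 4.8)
The system \eqref{eq:mns} is, for fixed $\e>0$, a semilinear coupling of the parabolic Navier--Stokes equation for $\ue$ with the hyperbolic Maxwell block for $(\ee,\be)$, the coupling entering only through Ohm's law $\je=\ee+\ue\times\be$ and the Lorentz force $\je\times\be$; every nonlinearity is at most quadratic. Since $s>3>d/2+1$ with $d=2$, the space $H^{s}(\T)$ is a Banach algebra that embeds into $W^{1,\infty}$, so the plan is the classical three-step scheme: (i) construct a local-in-time solution by a Galerkin (Fourier-truncation) approximation, establishing the a~priori bounds on the truncated system and passing to the limit by compactness; (ii) propagate the $H^{s}$ bounds to an arbitrary finite $T$ via energy estimates, which forbids finite-time blow-up and upgrades the local solution to a global one; (iii) prove uniqueness by a stability estimate for the difference of two solutions. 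All bounds may (and will) depend on $\e$, since no uniformity is claimed here.

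The engine of the argument is the a~priori estimate. Testing the $\ue$-equation with $\ue$, the $\ee$-equation with $\ee$ and the $\be$-equation with $\be$, discarding the pressure and the transport term by $\dive\ue=0$, and integrating by parts to cancel the two curl contributions $\langle\curl\be,\ee\rangle-\langle\curl\ee,\be\rangle=0$, one is left, after substituting $\je=\ee+\ue\times\be$, with the clean identity
\begin{equation*}
\frac{1}{2}\frac{d}{dt}\left(\|\ue\|_2^2+\e\|\ee\|_2^2+\|\be\|_2^2\right)+\|\nabla\ue\|_2^2+\|\je\|_2^2=0,
\end{equation*}
which yields the uniform bounds $\ue,\be\in L^{\infty}(0,T;L^2)$, $\sqrt{\e}\,\ee\in L^{\infty}(0,T;L^2)$ and $\nabla\ue,\je\in L^{2}(0,T;L^2)$. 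Applying $\Lambda^{\sigma}:=(I-\Delta)^{\sigma/2}$ for $0\le\sigma\le s$ and repeating the computation, the curl cancellation persists at every order and Ohm's law produces a genuine damping $-\|\ee\|_{H^{\sigma}}^2$, so the higher-order balance reads schematically
\begin{equation*}
\frac{1}{2}\frac{d}{dt}\left(\|\ue\|_{H^{\sigma}}^2+\e\|\ee\|_{H^{\sigma}}^2+\|\be\|_{H^{\sigma}}^2\right)+\|\nabla\ue\|_{H^{\sigma}}^2+\|\ee\|_{H^{\sigma}}^2=R_{\sigma},
\end{equation*}
where $R_{\sigma}$ collects the commutator from $(\ue\cdot\nabla)\ue$, the Lorentz force $\je\times\be$, and the source $\ue\times\be$ coming from Ohm's law, all of which are handled by the Kato--Ponce inequality \eqref{eq:KP} and the Gagliardo--Nirenberg inequality \eqref{eq:GN}.

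The crux is that the Maxwell block carries no parabolic smoothing: there is dissipation for $\ee$ but none for $\be$, so $\|\be\|_{H^{s}}$ can only be \emph{propagated}, not gained. I would therefore close the estimates by first securing, through the two-dimensional enstrophy estimate for $\ue$ (testing with $-\Delta\ue$, bounding the $\curl\be\times\be$-type force by \eqref{eq:GN}), a global-in-time control of the Lipschitz norm $\int_0^{T}\|\ue\|_{W^{1,\infty}}\,dt$; feeding this into the $H^{s}$-balance above turns the coefficients multiplying $\|\be\|_{H^{s}}^2$ into time-integrable functions, so a Gronwall argument gives $\ue,\be,\sqrt{\e}\,\ee\in L^{\infty}(0,T;H^{s})$ for every finite $T$, the absence of $\be$-smoothing being compensated by the damping of $\ee$ and, where a borderline logarithmic loss appears, by a Brezis--Gallouet/Gagliardo--Nirenberg interpolation. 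The time-continuity $\ue,\be,\ee\in C([0,T);H^{s})$ then follows from the equations and these bounds. Uniqueness is obtained by writing \eqref{eq:mns} for the difference of two solutions with identical data and running the basic $L^2$ energy estimate, the quadratic nonlinearities being absorbed via the already established $W^{1,\infty}$ control. I expect the genuine obstacle to be precisely this propagation of $H^{s}$-regularity for the non-smoothing magnetic field, that is, controlling $R_{s}$ so that the only coefficients multiplying $\|\be\|_{H^{s}}^2$ are time-integrable, which is exactly where two-dimensionality is indispensable.
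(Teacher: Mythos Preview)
The paper does not prove this theorem: it is stated in the Preliminaries section and attributed to Masmoudi \cite{M10} without any argument given here. There is therefore no ``paper's own proof'' to compare against; the result is simply quoted as background.

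That said, your sketch is in line with the strategy of \cite{M10}. The basic energy identity you wrote is exactly Lemma~\ref{lem:est1} of the present paper, the higher-order balance with the curl cancellation and the $H^s$ damping of $E$ is the right structure, and you have correctly identified the obstruction: the magnetic field sees no smoothing, so closing the $H^s$ Gronwall loop requires an a~priori control of $\|\nabla\ue\|_{\infty}$ (or of $\|\be\|_{\infty}$, $\|\ee\|_{\infty}$) that is only available in two dimensions via a logarithmic Brezis--Gallouet/Beale--Kato--Majda type inequality. One point to sharpen: the intermediate step ``first securing $\int_0^T\|\ue\|_{W^{1,\infty}}$ through the enstrophy estimate'' is not self-contained, because testing the velocity equation with $-\Delta\ue$ produces a right-hand side involving $\|\be\|_{\infty}$ or $\|\nabla\be\|_{L^2}$, which is precisely what you do not yet control; in \cite{M10} this circularity is broken by running the $H^s$ estimate and the $L^\infty$ control simultaneously, using the logarithmic inequality to replace $\|\be\|_{\infty}$ by $\|\be\|_{H^1}\log(e+\|\be\|_{H^s})$ and then applying an Osgood/log-Gronwall argument rather than a linear Gronwall. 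With that adjustment your outline is essentially the cited proof.
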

This result has been extended to the three-dimensional space with small initial data in \cite{GIM}. We want to point out that the global existence of weak solutions {\em a l\`a} Leray-Hopf is an open problem even in two dimensions, see \cite{M10}.  

\section{A priori estimates}\label{sec:est}

In this section we will recover the main a priori estimates necessary to prove Theorem \ref{teo:main}. Let $\ue_0$, $\be_0$ and $\ee_0$ be  smooth initial data and $\ue$, $\be$ and $\ee$ the unique global smooth solutions of the Cauchy problem \eqref{eq:mns}-\eqref{eq:id}. The first basic $\e$-independent a priori estimate for the system \eqref{eq:mns} is the classical energy estimate, see \cite{M10}. 
\begin{lemma}
\label{lem:est1}
Let $(\ue,\be, \ee)$ be a solution of the system \eqref{eq:mns}, then the following differential equality holds. 
\begin{equation}
\label{eq:est1}
\frac{d}{dt}\left(\int|\ue|^2+|\be|^2+\e|\ee|^2\right)+2\int|\nabla\ue|^2+|\je|^2=0.
\end{equation}
\end{lemma}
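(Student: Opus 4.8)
The plan is to carry out the classical energy balance: test each evolution equation in \eqref{eq:mns} against its own unknown, integrate over $\T$, and exploit both the skew-symmetry of the Maxwell coupling and Ohm's law to close the identity.

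First I would pair the momentum equation with $\ue$ in $L^2(\T)$. Since $\dive\ue=0$, the convective and pressure contributions drop, $\int(\ue\cdot\nabla)\ue\cdot\ue=\frac12\int\ue\cdot\nabla|\ue|^2=0$ and $\int\nabla q^{\e}\cdot\ue=-\int q^{\e}\dive\ue=0$, so that on the left only $\frac12\frac{d}{dt}\int|\ue|^2+\int|\nabla\ue|^2$ survives and on the right the Lorentz work $\int(\je\times\be)\cdot\ue$. The crucial step is to rewrite this last term: by the scalar triple product identity $(\je\times\be)\cdot\ue=-\je\cdot(\ue\times\be)$ together with Ohm's law $\ue\times\be=\je-\ee$ one obtains $\int(\je\times\be)\cdot\ue=-\int|\je|^2+\int\je\cdot\ee$, hence
\begin{equation*}
\frac12\frac{d}{dt}\int|\ue|^2+\int|\nabla\ue|^2+\int|\je|^2=\int\je\cdot\ee.
\end{equation*}

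Next I would pair the Amp\`ere--Maxwell equation $\e\pt\ee-\curl\be=-\je$ with $\ee$ and the Faraday equation $\pt\be+\curl\ee=0$ with $\be$, integrate over $\T$ and sum. On the torus the curl is self-adjoint, $\int\curl\be\cdot\ee=\int\be\cdot\curl\ee$ with no boundary terms, so the two coupling terms $\mp\int\be\cdot\curl\ee$ cancel and one is left with $\frac{\e}{2}\frac{d}{dt}\int|\ee|^2+\frac12\frac{d}{dt}\int|\be|^2=-\int\je\cdot\ee$. Adding this to the previous identity the terms $\int\je\cdot\ee$ cancel, and multiplying by $2$ gives exactly \eqref{eq:est1}.

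There is no genuine analytical obstacle here: by Theorem \ref{teo:exmsn} the solution $(\ue,\be,\ee)$ is smooth on $[0,T)$, so every integration by parts and every use of $\frac{d}{dt}\int|\cdot|^2=2\int\pt(\cdot)\cdot(\cdot)$ is justified. The only point deserving real attention is the algebraic manipulation of the Lorentz force term through Ohm's law, which is precisely what turns the work of $j\times B$ into the true Joule dissipation $\int|\je|^2$ and makes the estimate $\e$-independent.
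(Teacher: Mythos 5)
Your proof is correct and follows exactly the route the paper sketches: multiply the momentum, Amp\`ere--Maxwell and Faraday equations by $\ue$, $\ee$ and $\be$ respectively, integrate by parts on the torus, and use Ohm's law to convert the Lorentz work into $-\int|\je|^2+\int\je\cdot\ee$ so that the coupling terms cancel upon summation. You have simply written out in full the details the paper calls ``rather standard.''
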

\begin{proof}
The proof is rather standard. We multiply the  first three equations of \eqref{eq:mns} by  $\ue$,   $\be$ and  $\ee$ respectively. We integrate by parts in space, by using the definition of $\je$ and adding up everything we  obtain \eqref{eq:est1}. 
\end{proof}
The a priori estimates of Lemma \ref{lem:est1} are clearly not enough to justify the limit as $\e$ goes to zero. In order to simplify the computations to get further a priori estimates, we  rewrite  the system \eqref{eq:mns} in the following form,
\begin{equation}
\label{eq:main}
\begin{aligned}
\pt\ue-\Delta\ue+(\ue\cdot\nabla)\ue+\nabla\pe& =(\be\cdot\nabla)\be-\e\pt\ee\times\be,\\
\e\ptt\be+\partial_t\be-\Delta\be+(\ue\cdot\nabla)\be& =(\be\cdot\nabla)\ue,\\
\e\pt\ee+\ee-\curl\be &=-(\ue\times\be),\\
\dive\ue &=0,\\
\dive\be &=0.
\end{aligned}
\end{equation}
The initial data for the system \eqref{eq:main} are 
\begin{equation*}
\begin{aligned}
\ue(x,0)&=\ue_0(x),\\
\be(x,0)&=\be_0(x),\\
\pt\be(x,0)&=\curl{E_0^{\e}}(x),\\
\ee(x,0)&=\ee_0(x).
\end{aligned}
\end{equation*} 
Note that the value of $\partial_t\be$ at time $t=0$ is obtained from the system \eqref{eq:mns} and the pressure has been redefined.
The next Lemma is the first main a priori estimate of the paper. Before stating it we define the following quantities 
\begin{align}
\label{eq:e1}
\mathcal{E}_1(t)&=\int\frac{|\ue|^2}{2}+\frac{|\be+2\e\pt\be|^2}{2}+3\e|\nabla\be|^2+\e^2|\pt\be|^2+\e\frac{|\ee|^2}{2}\\
\label{eq:2}
\mathcal{D}_1(t)&=\int\e^2|\pt\ee|^2+\frac{1}{2}|\nabla\ue|^2+\frac{1}{2}|\nabla\be|^2+\e|\pt\be|^2.
\end{align}

\begin{lemma}\label{lem:e1}
Let $(\ue,\be,\ee)$ be a smooth solution of \eqref{eq:mns}-\eqref{eq:id} in $\T\times(0,T)$. There exists an absolute constant $C_1>0$ such that, if 
\begin{equation}\label{eq:cond1}
\|\ue(t,\cdot)\|_{\infty}+\|\be(t,\cdot)\|_{\infty}\leq\frac{C_1}{\sqrt{\e}}\quad \text{ for any $t\in[0,T)$}
\end{equation}
then, 
\begin{equation}\label{eq:est2}
\frac{d}{dt}\mathcal{E}_1(t)+\mathcal{D}_1(t)\leq 0\qquad\textrm{for any $t\in(0,T)$}.
\end{equation}
\end{lemma}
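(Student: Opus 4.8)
The plan is to perform a weighted energy estimate for the reformulated system \eqref{eq:main}, carefully combining the natural energies for the parabolic-in-$u$, damped-wave-in-$B$, and relaxed-in-$E$ blocks so that the bad coupling terms cancel or get absorbed. First I would test the velocity equation with $\ue$, the second (damped wave) equation with $\be$, with $\pt\be$, and with $-\Delta\be$, and the third equation with $\ee$; the specific combination that produces $\mathcal{E}_1$ in \eqref{eq:e1} suggests testing the wave equation against $\be + 2\e\pt\be$ (which explains the $|\be+2\e\pt\be|^2/2$ term and the $3\e|\nabla\be|^2 + \e^2|\pt\be|^2$ terms) together with an extra multiplier controlling $\nabla\be$. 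After integration by parts on $\T$ (no boundary terms, which is why the argument is specific to the torus, as remarked in the introduction), I would collect: from $u$, the dissipation $\|\nabla\ue\|_2^2$ and the trilinear terms $\int (\ue\cdot\nabla)\ue\cdot\ue = 0$ and $\int \nabla\pe\cdot\ue = 0$; from $B$, the dissipation $\|\nabla\be\|_2^2$ plus $\e\|\pt\be\|_2^2$ plus a term like $\e^2 \frac{d}{dt}\|\pt\be\|_2^2$; from $E$, the term $\e^2\|\pt\ee\|_2^2$ appearing in $\mathcal{D}_1$ and the relaxation term $\|\ee\|_2^2$.

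The heart of the matter is handling the coupling terms. The Lorentz-type term $-\e\,\pt\ee\times\be$ in the velocity equation, tested against $\ue$, gives $-\e\int(\pt\ee\times\be)\cdot\ue = \e\int\pt\ee\cdot(\be\times\ue) = -\e\int\pt\ee\cdot(\ue\times\be)$, which by the relaxed Ohm law $\ue\times\be = \curl\be - \ee - \e\pt\ee$ becomes $-\e\int\pt\ee\cdot\curl\be + \e\int\pt\ee\cdot\ee + \e^2\int|\pt\ee|^2$; the last two are $\frac{\e}{2}\frac{d}{dt}\|\ee\|_2^2 + \e^2\|\pt\ee\|_2^2$, explaining the $\e|\ee|^2/2$ in $\mathcal{E}_1$ and the $\e^2|\pt\ee|^2$ in $\mathcal{D}_1$, while $-\e\int\pt\ee\cdot\curl\be = -\e\int\curl\pt\ee\cdot\be = \e\int(\pt\be + \pt\ee - \pt\curl\be)\cdot\be$ --- here I would use $\pt\be = -\curl\ee$ and $\e\pt\curl\ee = \curl\be - \ee - (\ue\times\be) - \e\,(\text{stuff})$ carefully, or more cleanly differentiate the Faraday/Ampère pair to see $\e\ptt\be = -\curl\curl\be + \curl(\ue\times\be) - \pt\be$, which is exactly the second line of \eqref{eq:main}; so I would instead just directly use the second equation of \eqref{eq:main}. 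Meanwhile the magnetic convection terms: $\int(\ue\cdot\nabla)\be\cdot\be = 0$ and, tested against $\be$, $\int(\be\cdot\nabla)\ue\cdot\be$ pairs with the $(\be\cdot\nabla)\be$ term in the $u$-equation tested against $\ue$ via the identity $\int(\be\cdot\nabla)\be\cdot\ue + \int(\be\cdot\nabla)\ue\cdot\be = \int \be\cdot\nabla(\ue\cdot\be) = 0$ --- this is the classical MHD cancellation. The genuinely new terms are those coming from testing against $2\e\pt\be$: $\e\int(\ue\cdot\nabla)\be\cdot\pt\be$ and $\e\int(\be\cdot\nabla)\ue\cdot\pt\be$, which cannot be cancelled and must be absorbed.

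The main obstacle is exactly absorbing these $\e$-weighted trilinear terms into the dissipation $\mathcal{D}_1$, and this is where hypothesis \eqref{eq:cond1} enters. A term like $\e\int(\ue\cdot\nabla)\be\cdot\pt\be$ is bounded by $\e\|\ue\|_\infty\|\nabla\be\|_2\|\pt\be\|_2$; using $\|\ue\|_\infty \le C_1/\sqrt{\e}$ this is $\le C_1\sqrt{\e}\|\nabla\be\|_2\|\pt\be\|_2 \le \frac{C_1}{2}(\|\nabla\be\|_2^2 + \e\|\pt\be\|_2^2)$, which for $C_1$ small enough is absorbed by the $\frac12\|\nabla\be\|_2^2 + \e\|\pt\be\|_2^2$ in $\mathcal{D}_1$. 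Similarly $\e\int(\be\cdot\nabla)\ue\cdot\pt\be \le \e\|\be\|_\infty\|\nabla\ue\|_2\|\pt\be\|_2 \le C_1\sqrt{\e}\|\nabla\ue\|_2\|\pt\be\|_2 \le \frac{C_1}{2}(\|\nabla\ue\|_2^2 + \e\|\pt\be\|_2^2)$, absorbed by $\frac12\|\nabla\ue\|_2^2$ and the $\e\|\pt\be\|_2^2$ margin. The other coupling terms involving $\ee\times$ and $\curl\be$ against $\pt\be$ or $\ee$ are handled by Cauchy-Schwarz against the $\e^2\|\pt\ee\|_2^2$ and $\|\ee\|_2^2$ pieces (the latter not appearing in $\mathcal{D}_1$ must therefore cancel exactly, so I would keep careful track of signs to confirm the $\|\ee\|_2^2$ terms telescope). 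I would fix $C_1$ once and for all to make all absorptions simultaneously valid, arriving at $\frac{d}{dt}\mathcal{E}_1 + \mathcal{D}_1 \le 0$. The bookkeeping --- making sure every term generated lands either in $\frac{d}{dt}\mathcal{E}_1$, cancels, or is dominated by $\mathcal{D}_1$ --- is the delicate part; the individual estimates are routine Gagliardo-Nirenberg and Young's inequality applications.
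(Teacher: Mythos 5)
Your strategy is essentially the one the paper uses: a weighted energy estimate on the reformulation \eqref{eq:main}, testing the velocity equation with $\ue$, the damped-wave equation for $\be$ with a multiplier of the form $\be+c\,\e\pt\be$, closing the electric-field contribution so that $\e\tfrac{d}{dt}\int|\ee|^2/2$ and $\e^2\int|\pt\ee|^2$ emerge, relying on the exact cancellation of the Lorentz/Ohm coupling and of the classical MHD trilinear pair, and invoking \eqref{eq:cond1} only to absorb the $\e$-weighted convection terms into $\tfrac12\|\nabla\ue\|_2^2+\tfrac12\|\nabla\be\|_2^2$. Three execution points, however, need repair before this becomes a proof. (i) There is a sign slip in the triple product: $(\pt\ee\times\be)\cdot\ue=\pt\ee\cdot(\be\times\ue)$, hence $-\e\int(\pt\ee\times\be)\cdot\ue=+\e\int\pt\ee\cdot(\ue\times\be)$; followed literally, your chain puts $-\e^2\|\pt\ee\|_2^2$ into the dissipation and $-\e|\ee|^2/2$ into the energy, i.e.\ with the wrong signs. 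With the corrected sign everything lands where you claim, and your Ohm's-law substitution is then algebraically identical to the paper's device of multiplying the third equation of \eqref{eq:main} by $\e\pt\ee$ and cancelling the resulting $\pm\e\int\ptt\be\cdot\be$ and $\pm\int\ue\cdot(\e\pt\ee\times\be)$ pairs. (ii) The multiplier $\be+2\e\pt\be$ does not reproduce $\mathcal{E}_1$: it yields $\e|\nabla\be|^2$ rather than $3\e|\nabla\be|^2$, produces $\e^2|\pt\be|^2$ with the wrong sign, and leaves a $+\e\int\ptt\be\cdot\be$ that then fails to cancel against the $+\e\int\ptt\be\cdot\be$ coming from the electric-field estimate. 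The paper instead rewrites the wave equation as $2\e\ptt\be+\pt\be-\Delta\be+(\ue\cdot\nabla)\be-\e\ptt\be=(\be\cdot\nabla)\ue$ and multiplies by $\be+6\e\pt\be$, which generates exactly \eqref{eq:e1} and the needed $-\e\int\ptt\be\cdot\be$. Also, testing the third equation with $\ee$ itself (which you list as an option) is not needed and is harmful: it introduces the coupling $\int(\ue\times\be)\cdot\ee$, which under \eqref{eq:cond1} only gives a bound of order $\e^{-1}\|\be\|_2^2$ and so does not close uniformly in $\e$. (iii) Your direct Young-inequality absorption of $\e\int(\ue\cdot\nabla)\be\cdot\pt\be$ and $\e\int(\be\cdot\nabla)\ue\cdot\pt\be$ is a workable variant of the paper's completion of the square $3\e\int|\pt\be+\curl(\be\times\ue)|^2-3\e\int|\curl(\be\times\ue)|^2$, where only the negative piece is estimated and $C_1$ is fixed as an absolute constant. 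None of this is a conceptual gap, but the sign in (i) and the multiplier in (ii) must be corrected for the stated $\mathcal{E}_1$, $\mathcal{D}_1$ inequality to come out.
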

\begin{proof}
We multiply the first equation in \eqref{eq:main} by $\ue$, after integration by parts we get
\begin{equation}\label{eq:A}
\frac{d}{dt}\int\frac{|\ue|^2}{2}+\int|\nabla\ue|^2=\int(\be\cdot\nabla)\be\cdot\ue-\int(\e\pt\ee\times\be)\cdot\ue.
\end{equation}
Then, we consider the second equation of \eqref{eq:main} rewritten as follows 
\begin{equation}\label{eq:est21}
2\e\ptt\be+\partial_t\be-\Delta\be+(\ue\cdot\nabla)\be -\e\ptt\be=(\be\cdot\nabla)\ue.
\end{equation}
We multiply \eqref{eq:est21} by $\be+6\e\pt\be$, and after integrating by parts we get 
\begin{equation*}
\begin{aligned}
&\frac{d}{dt}\left(\int\frac{|\be+2\e\pt\be|^2}{2}+3\e|\nabla\be|^2+\e^2|\pt\be|^2\right)+4\e\int|\pt\be|^2\\
&+\int|\nabla\be|^2-\e\int\ptt\be\cdot\be+6\e\int\pt\be\cdot\curl(\be\times\ue)\\
&=\int(\be\cdot\nabla)\ue\cdot\be,
\end{aligned}
\end{equation*}
which can be reformulated as follows,
\begin{equation}
\label{eq:B}
\begin{aligned}
&\frac{d}{dt}\left(\int\frac{|\be+2\e\pt\be|^2}{2}+3\e|\nabla\be|^2+\e^2|\pt\be|^2\right)+\int|\nabla\be|^2+\e\int|\pt\be|^2\\
&+3\e\int|\pt\be+\curl(\be\times\ue)|^2-\e\int\ptt\be\cdot\be\\
&-3\e\int|\curl(\be\times\ue)|^2=\int(\be\cdot\nabla)\ue\cdot\be.
\end{aligned}
\end{equation}
Finally, we multiply the third equation of \eqref{eq:main} by $\e\pt\ee$ and, after an integration by parts we have
\begin{equation}
\label{eq:est22}
\frac{d}{dt}\e\!\int\frac{|\ee|^2}{2}+\e^2\!\!\int|\pt\ee|^2-\int\be\cdot\e\pt\curl\ee=-\int(\ue\times\be)\cdot\e\pt\ee.
\end{equation}
By using  $\eqref{eq:mns}_{3}$ and the following  standard property of vector and scale products 
\begin{equation*}
-(\ue\times\be)\cdot\e\pt\ee=\ue\cdot(\e\pt\ee\times\be),
\end{equation*}
 \eqref{eq:est22} becomes
\begin{equation}
\label{eq:C}
\frac{d}{dt}\int\e\frac{|\ee|^2}{2}+\e^2\int|\pt\ee|^2+\e\int\ptt\be\cdot\be=\int\ue\cdot(\e\pt\ee\times\be).
\end{equation}
By adding up \eqref{eq:A}, \eqref{eq:B} and \eqref{eq:C} we get 
\begin{equation}
\label{eq:est23}
\begin{aligned}
&\frac{d}{dt}\mathcal{E}_1(t)+\int|\nabla\be|^2+\int|\nabla\ue|^2+\e^2\int|\pt\ee|^2+\e\int\ptt\be\cdot\be\\
&+\e\int|\pt\be|^2+3\e\int|\pt\be+\curl(\be\times\ue)|^2-\e\int\ptt\be\cdot\be\\
&-3\e\int|\curl(\be\times\ue)|^2=0.
\end{aligned}
\end{equation}
At this point we treat the term with negative sign in the right hand side. We have that 
\begin{equation*}
\begin{aligned}
&\int|\curl(\be\times\ue)|^2\leq \int|(\ue\cdot\nabla)\be|^2+|(\be\cdot\nabla)\ue|^2\\
&\leq C(\|\ue\|_{\infty}^2+\|\be\|_{\infty}^2)\left(\frac{1}{2}\|\nabla\ue\|_2^2+\frac{1}{2}\|\nabla\be\|_2^2\right),
\end{aligned}
\end{equation*}
where $C>0$ is an absolute constant. Then \eqref{eq:est23} becomes an inequality and we get \eqref{eq:est2} with $C_{1}=\sqrt{\frac{1}{3C}}$.
\end{proof}
We need also higher order a priori estimates independent on $\e$. This will be done in  the next Lemma. Let us define the following quantities
 \begin{equation*}
\begin{aligned}
\mathcal{E}_2(t)&=\int\frac{|\nabla\ue|^2}{2}+\e\frac{|\Delta\ue|^2}{2}+\frac{|\nabla\be+2\e\pt\nabla\be|^{2}}{2}\\
&+\int3\e|\Delta\be|^2+\e^2|\pt\nabla\be|^2+\e\frac{|\nabla\ee|^2}{2}.\\
\mathcal{D}_2(t)&=\frac{1}{4}\left(\int|\Delta\ue|^2+|\Delta\be|^2+\e|\pt\nabla\ue|^2+\e^2|\pt\nabla\ee|^2\right).
\end{aligned}
\end{equation*}
Then, the following lemma holds.
\begin{lemma}\label{lem:e2}
 Let $(\ue,\be,\ee)$ be a smooth solution of \eqref{eq:mns}-\eqref{eq:id} in $\T\times(0,T)$. There exists an absolute constant $C_2>0$ such that if
\begin{equation}\label{eq:cond2}
\|\ue(t,\cdot)\|_{\infty}+\|\be(t,\cdot)\|_{\infty}\leq\frac{C_2}{\sqrt{\e}}\quad \text{for any $t\in[0,T)$}
\end{equation}
 then, the following differential inequality holds,
\begin{equation}
\label{eq:est3}
\frac{d}{dt}\mathcal{E}_2(t)+\mathcal{D}_2(t)\leq C(1+\mathcal{E}_1(t))\mathcal{D}_1(t)\mathcal{E}_2(t).
\end{equation}  
\end{lemma}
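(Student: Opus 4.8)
The plan is to differentiate the system \eqref{eq:main} once in space and repeat, with this extra derivative, the weighted energy computation that produced Lemma~\ref{lem:e1}. Concretely, I would apply $\nabla$ to the three evolution equations and test: the $\nabla$-momentum equation against $\nabla\ue+\e\pt\nabla\ue$; the $\nabla$-differentiated rewritten induction equation \eqref{eq:est21} against $\nabla\be+6\e\pt\nabla\be$; and the $\nabla$-differentiated Amp\`ere equation against $\e\pt\nabla\ee$. The first test produces $\tfrac{d}{dt}\bigl(\tfrac12\|\nabla\ue\|_2^2+\tfrac\e2\|\Delta\ue\|_2^2\bigr)$ together with the dissipation $\|\Delta\ue\|_2^2+\e\|\pt\nabla\ue\|_2^2$, the pressure gradient dropping since $\Delta\ue$ and $\Delta\pt\ue$ are divergence free. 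The second, completing the square exactly as in the passage from \eqref{eq:est21} to \eqref{eq:B}, produces $\tfrac{d}{dt}\bigl(\tfrac12\|\nabla\be+2\e\pt\nabla\be\|_2^2+3\e\|\Delta\be\|_2^2+\e^2\|\pt\nabla\be\|_2^2\bigr)$, the dissipation $\|\Delta\be\|_2^2+\e\|\pt\nabla\be\|_2^2$, a good square $3\e\|\pt\nabla\be+(\ue\cdot\nabla)\nabla\be-(\be\cdot\nabla)\nabla\ue\|_2^2$, an indefinite term $-\e\int\ptt\nabla\be\cdot\nabla\be$, and the bad term $-3\e\|(\ue\cdot\nabla)\nabla\be-(\be\cdot\nabla)\nabla\ue\|_2^2$. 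The third produces $\tfrac{d}{dt}\tfrac\e2\|\nabla\ee\|_2^2+\e^2\|\pt\nabla\ee\|_2^2$ and, after using the differentiated Faraday law $\pt\nabla\be+\curl\nabla\ee=0$, the term $+\e\int\ptt\nabla\be\cdot\nabla\be$. Adding the three identities, the two occurrences of $\pm\e\int\ptt\nabla\be\cdot\nabla\be$ cancel, the $\e\pt\nabla\ee\times\be$ contributions of the momentum and of the Amp\`ere equations cancel pairwise by the scalar triple product (as the $\e\pt\ee\times\be$ terms cancel between \eqref{eq:A} and \eqref{eq:C}), and the top-order Lorentz and stretching terms $\int(\be\cdot\nabla)\nabla\be\cdot\nabla\ue+\int(\be\cdot\nabla)\nabla\ue\cdot\nabla\be$ cancel as a perfect $\be$-transport (as in the derivation of \eqref{eq:est23}). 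What remains on the left is $\tfrac{d}{dt}\mathcal{E}_2(t)$ plus a dissipation bounding $\mathcal{D}_2(t)$ plus the good square, and on the right the bad term together with a finite list of nonlinear commutators produced by the Leibniz rule on the quadratic nonlinearities.

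I would then estimate these. The bad term satisfies $3\e\|(\ue\cdot\nabla)\nabla\be-(\be\cdot\nabla)\nabla\ue\|_2^2\le 6\e\bigl(\|\ue\|_\infty^2\|\Delta\be\|_2^2+\|\be\|_\infty^2\|\Delta\ue\|_2^2\bigr)\le 6C_2^2\bigl(\|\Delta\be\|_2^2+\|\Delta\ue\|_2^2\bigr)$ by \eqref{eq:cond2}, and is absorbed into $\tfrac14\mathcal{D}_2(t)$ once $C_2$ is chosen small; an $L^\infty$ norm is forced here because the two space derivatives already on $\be,\ue$ leave no room to interpolate $\ue$ or $\be$ in $L^4$. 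The remaining commutators come in two families: the fluid/magnetic ones such as $\int(\nabla\ue\cdot\nabla)\ue\cdot\nabla\ue$, $\int(\nabla\be\cdot\nabla)\be\cdot\nabla\ue$, $\int(\nabla\be\cdot\nabla)\ue\cdot\nabla\be$ together with their $\e\pt\nabla\ue$-, respectively $6\e\pt\nabla\be$-tested analogues, and the coupling ones such as $\e\int(\pt\ee\times\nabla\be)\cdot\nabla\ue$, $\e\int(\ue\times\nabla\be)\cdot\pt\nabla\ee$, $\e^2\int(\pt\ee\times\nabla\be)\cdot\pt\nabla\ue$, $\e^2\int(\pt\nabla\ee\times\be)\cdot\pt\nabla\ue$. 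Each is to be estimated by H\"older, the two-dimensional Gagliardo--Nirenberg inequality \eqref{eq:GN} (systematically interpolating the least-regular factor via $\|f\|_4\lesssim\|f\|_2^{1/2}\|\nabla f\|_2^{1/2}$, and occasionally rewriting the momentum forcing in the form $\nabla(\je\times\be)$ with $\je=\curl\be-\e\pt\ee$, so that the electric field enters only through $\je,\nabla\je$), the Kato--Ponce inequality \eqref{eq:KP} where a product has to be split, and finally Young's inequality, so as to bound it by $\delta\,\mathcal{D}_2(t)+C_\delta\bigl(1+\mathcal{E}_1(t)\bigr)\mathcal{D}_1(t)\mathcal{E}_2(t)$; the hypothesis \eqref{eq:cond2} reappears only in the few terms where an $L^\infty$ norm of $\ue$ or $\be$ cannot be avoided, always paired with enough powers of $\e$ that it becomes the small constant needed to absorb the term into $\mathcal{D}_2$. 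Summing and absorbing the total $\delta$-part into the left-hand dissipation --- which controls $\mathcal{D}_2$, plus the auxiliary $\e\|\pt\nabla\be\|_2^2$ used to soak up the $6\e\pt\nabla\be$-tested commutators --- yields \eqref{eq:est3}.

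The step I expect to be the main obstacle is the bookkeeping of the powers of $\e$ in the terms coupling the parabolic momentum and induction equations to the ``hyperbolic'' Amp\`ere equation (and in the $\e\ptt\be$-piece of the induction equation). The only quantities one is allowed to produce are those controlled by $\mathcal{E}_1,\mathcal{D}_1,\mathcal{E}_2,\mathcal{D}_2$ in the right combinations: $\e\|\Delta\ue\|_2^2,\ \e\|\Delta\be\|_2^2,\ \e\|\nabla\ee\|_2^2\lesssim\mathcal{E}_2$; $\ \|\nabla\ue\|_2^2,\ \|\nabla\be\|_2^2,\ \e^2\|\pt\ee\|_2^2,\ \e\|\pt\be\|_2^2\lesssim\mathcal{D}_1$ (the first two also $\lesssim\mathcal{E}_2$); $\ \|\Delta\ue\|_2^2,\ \|\Delta\be\|_2^2,\ \e\|\pt\nabla\ue\|_2^2,\ \e^2\|\pt\nabla\ee\|_2^2\lesssim\mathcal{D}_2$; $\|\nabla\je\|_2^2\lesssim\mathcal{D}_2$, $\|\je\|_2^2\lesssim\mathcal{D}_1$; the squared $L^2$ norms of $\ue,\be$ are $\lesssim\mathcal{E}_1$; and one needs as well auxiliary bounds such as $\e\mathcal{D}_1\lesssim 1+\mathcal{E}_1$ deduced from the Amp\`ere and Ohm relations together with \eqref{eq:cond2}. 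One must therefore never estimate $\ee,\pt\ee$ (or $\ue,\be$) in $L^\infty$ except where \eqref{eq:cond2} is genuinely required --- the only available bounds, $\|\ee\|_2\lesssim\e^{-1/2}\mathcal{E}_1^{1/2}$ and \eqref{eq:cond2}, carry negative powers of $\e$ --- must distribute the explicit $\e$ in each coupling term among its factors so that every piece lands in one of the displayed combinations, and must choose each Young splitting so that $\|\Delta\ue\|_2^2,\ \|\Delta\be\|_2^2,\ \e\|\pt\nabla\ue\|_2^2,\ \e^2\|\pt\nabla\ee\|_2^2$ (and the auxiliary $\e\|\pt\nabla\be\|_2^2$) occur only linearly while the remainder collapses to $(1+\mathcal{E}_1)\mathcal{D}_1\mathcal{E}_2$. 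None of the individual estimates is deep; aligning all the $\e$-powers so that no term survives outside this structure is the delicate point, and it also dictates the choice of the multipliers (in particular the coefficient $6$ in $\nabla\be+6\e\pt\nabla\be$ must be large enough for the completion of squares to release the needed dissipation).
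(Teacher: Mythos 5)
Your plan is correct and is essentially the paper's own proof: testing the spatially differentiated system against $\nabla\ue+\e\pt\nabla\ue$, $\nabla\be+6\e\pt\nabla\be$ and $\e\pt\nabla\ee$ is, after integration by parts on the torus, exactly the paper's choice of multipliers $-\Delta\ue$, $-\e\Delta\pt\ue$, $-\Delta(\be+6\e\pt\be)$ and $-\e\pt\Delta\ee$, producing the same energy $\mathcal{E}_2$, the same cancellations of $\pm\e\int\ptt\be\cdot\Delta\be$ and of the top-order Lorentz/stretching and $\e\pt\ee\times\be$ contributions. The surviving commutators you list coincide with the paper's terms $(I)$--$(IV)$ (in particular $(IV)_{11}$, $(IV)_{12}$, $(IV)_{21}$, $(IV)_{22}$), and your use of Gagliardo--Nirenberg, Kato--Ponce, Young, and of hypothesis \eqref{eq:cond2} to absorb the negative square $-\tfrac94\e\int|\nabla\curl(\ue\times\be)|^2$ into the dissipation is exactly how the paper closes the estimate.
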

\begin{proof}
We start by multiplying the first equation of \eqref{eq:main} by $-\Delta\ue$, after an integration by parts we get 
\begin{equation}
\label{eq:A1}
\begin{aligned}
\frac{d}{dt}\int\frac{|\nabla\ue|^2}{2}+\int|\Delta\ue|^2&=\int\ue\cdot\nabla\ue\cdot\Delta\ue-\int\be\cdot\nabla\be\Delta\ue\\
                                                                                   &+\int(\e\pt\ee\times\be)\cdot\Delta\ue.
\end{aligned}
\end{equation}
The second estimate we perform is obtained by multiplying the first equation of \eqref{eq:main} by $-\e\Delta\pt\ue$
\begin{equation}
\label{eq:B1}
\begin{aligned}
\frac{d}{dt}\int\e\frac{|\Delta\ue|^2}{2}+\e\int|\nabla\pt\ue|^2&=\e\int\ue\cdot\nabla\ue\cdot\Delta\pt\ue\\
                                                                                             &+\e\int(\e\pt\ee\times\be)\Delta\pt\ue\\
                                                                                             &-\e\int\be\nabla\be\Delta\pt\ue.
\end{aligned}
\end{equation}

Then, we multiply \eqref{eq:est21} by $-\Delta(\be+6\e\pt\be)$ and we get 
\begin{equation}
\label{eq:C1}
\begin{aligned}
&\frac{d}{dt}\left(\int\frac{|\nabla\be+2\e\pt\nabla\be|^2}{2}+3\e|\Delta\be|^2+\e^2|\pt\nabla\be|^2\right)+\int|\Delta\be|^2\\
&+\e\int|2\pt\nabla\be+\frac{3}{2}\nabla\curl(\be\times\ue)|^2-\frac{9}{4}\e\int|\nabla\curl(\be\times\ue)|^2\\
&+\e\int\ptt\be\cdot\Delta\be=-\int\curl(\ue\times\be)\Delta\be.
\end{aligned}
\end{equation}
Finally, we multiply the third equation of  \eqref{eq:main} by $-\e\pt\Delta\ee$ and we obtain
\begin{equation}\label{eq:est31}
\begin{aligned}
&\frac{d}{dt}\int\e\frac{|\nabla\ee|^2}{2}+\e^2\int|\pt\nabla\ee|^2+\int\curl\be\e\pt\Delta\ee\\
&=\int(\ue\times\be)\e\pt\Delta\ee
\end{aligned}
\end{equation}
Concerning the third term of the left-hand  side of \eqref{eq:est31} by using again $\eqref{eq:mns}_{3}$ we have
\begin{equation*}
\begin{aligned}
\e\int\curl\be\pt\Delta\ee&=\e\int\be\pt\Delta\curl\ee=-\e\int\be\pt\Delta\pt\be\\
                                      &=-\e\int\be\ptt\Delta\be=-\e\int\Delta\be\ptt\be.
\end{aligned}
\end{equation*}
Then \eqref{eq:est31} becomes 
\begin{equation}
\label{eq:D1}
\begin{aligned}
&\frac{d}{dt}\int\e|\nabla\ee|^2+\e^2\int|\pt\nabla\ee|^2-\e\int\Delta\be\ptt\be\\
&=\int(\ue\times\be)\e\pt\Delta\ee.
\end{aligned}
\end{equation}
By summing up \eqref{eq:A1}, \eqref{eq:B1}, \eqref{eq:C1} and \eqref{eq:D1} we get 
\begin{equation}\label{eq:est32}
\begin{aligned}
&\frac{d}{dt}\mathcal{E}_2(t)+\int|\Delta\ue|^2+\e\int|\nabla\pt\ue|^2+\int|\Delta\be|^2\\
&+\e^2\int|\pt\nabla\ee|^2+\e\int|2\pt\nabla\be+\frac{3}{2}\nabla\curl(\be\times\ue)|^2\\
&-\frac{9}{4}\e\int|\nabla\curl(\ue\times\be)|^2\leq(I)+(II)+(III)+(IV).
\end{aligned}
\end{equation}
Where the terms on the right-hand side are respectively 
\begin{equation*}
(I)=\left|\int(\ue\cdot\nabla)\ue\Delta\ue-(\be\cdot\nabla)\be\Delta\ue+(\ue\cdot\nabla)\be\Delta\be-(\be\cdot\nabla)\ue\Delta\be\right|,
\end{equation*}
\begin{equation*}
(II)=\left|\e\int(\ue\cdot\nabla)\ue\Delta\pt\ue\right|,
\end{equation*}
\begin{equation*}
(III)=\left|\e\int(\be\cdot\nabla)\be\Delta\pt\ue\right|,
\end{equation*}
\begin{equation*}
(IV)=\left|\int(\e\pt\ee\times\be)\Delta\ue+\e\int(\ue\times\be)\pt\Delta\ee+ \int(\e\pt\ee\times\be)\e\Delta\pt\ue\right|.
\end{equation*}
We estimate all the previous termst separately. By integrating by parts we have that 

\begin{align}
\nonumber(I)&\leq C\int|\nabla \ue|^3+|\nabla \be|^2|\nabla\ue|\\
\nonumber   &\leq \|\nabla\ue\|_3^3+\|\nabla\be\|_4^2\|\nabla\ue\|_2\\
\nonumber   &\leq C\|\nabla\ue\|_2^2\|\Delta\ue\|_2+\|\nabla\be\|_2\|\nabla\ue\|_2\|\Delta\be\|_2\\
\label{eq:1}  &\leq C(\|\nabla\ue\|_2^2+\|\nabla\be\|_2^2)\|\nabla\ue\|_2^2+\frac{1}{32}\|\Delta\ue\|_2^2+\frac{1}{32}\|\Delta\be\|_2^2.
\end{align}
Where we have used the Gagliardo-Nirenberg inequality \eqref{eq:GN} first with $p=3$ and then with $p=4$ and Young inequality. Next we estimate the terms $(II)$ and $(III)$ for which we simply use Young inequality,
\begin{equation}\label{eq:2}
(II)\leq C\e\int(|\nabla((\ue\cdot\nabla)\ue))|^2+\frac{\e}{32}\|\nabla\pt\ue\|_2^2,
\end{equation}
\begin{equation}\label{eq:3}
(III)\leq C\e\int|\nabla((\be\cdot\nabla)\be)|^2+\frac{\e}{32}\|\nabla\pt\ue\|_2^2.
\end{equation}
The term $(IV)$ is a little bit troublesome. We split $(IV)$ into two parts, $(IV)_{1}$ and $(IV)_{2}$. First we consider $(IV)_1$ defined as follows 
\begin{equation*}
(IV)_1=\int(\e\pt\ee\times\be)\Delta\ue+(\ue\times\be)\e\pt\Delta\ee
\end{equation*}
By integrating by parts the second term in $(IV)_{1}$ we get 
\begin{equation*}
(IV)_1=\int(\e\pt\ee\times\be)\partial_{kk}\ue-\int(\partial_k\ue\times\be)\e\pt\partial_k\ee-\int(\ue\times\partial_k\be)\e\pt\partial_k\ee.
\end{equation*}
We integrate again by parts only the second term in $(IV)_{1}$, then
\begin{equation*}
\begin{aligned}
(IV)_1&=\int(\e\pt\ee\times\be)\partial_{kk}\ue+\int(\partial_{kk}\ue\times\be)\e\pt\ee\\
&+\int(\partial_k\ue\times\partial_k\be)\e\pt\ee-\int(\ue\times\partial_k\be)\e\pt\partial_k\ee\\
&=\int(\e\pt\ee\times\be)\partial_{kk}\ue-\int(\e\pt\ee\times\be)\partial_{kk}\ue\\
&+\int(\partial_k\ue\times\partial_k\be)\e\pt\ee-\int(\ue\times\partial_k\be)\e\pt\partial_k\ee\\
&=\int(\partial_k\ue\times\partial_k\be)\e\pt\ee-\int(\ue\times\partial_k\be)\e\pt\partial_k\ee\\
&=(IV)_{11}+(IV)_{12},
\end{aligned}
\end{equation*}
where standard vector identities have been used in the third line. Let us now estimate the term $(IV)_{11}$. 
By using H\"older inequality, Gagliardo Nirenberg inequality \eqref{eq:GN} with $p=4$ and Young inequality we have 
\begin{align}
\nonumber (IV)_{11}&\leq \e C\int|\nabla\ue||\nabla\be||\pt\ee|\\
\nonumber             &\leq \e C\|\nabla\ue\|_4\|\nabla\be\|_4\|\pt\ee\|_2\\
\nonumber             &\leq C\e\|\nabla\ue\|_{2}^{\frac{1}{2}}\|\nabla\be\|_{2}^{\frac{1}{2}}\|\Delta\ue\|_{2}^{\frac{1}{2}}\|\Delta\be\|_{2}^{\frac{1}{2}}\|\pt\ee\|_{2}\\
\nonumber             &\leq C\sqrt{\lambda}\e^2\|\nabla\ue\|_2\|\nabla\be\|_2\|\pt\ee\|_2^2+\frac{1}{\sqrt{\lambda}}\|\Delta\ue\|_2\|\Delta\be\|_2\\
\nonumber             &\leq C\sqrt{\lambda}\e^2\|\pt\ee\|_2^2(\|\nabla\ue\|_2^2+\|\nabla\be\|_2^2)+\frac{h}{2\lambda}\|\Delta\ue\|_{2}^2+\frac{1}{2h}\|\Delta\be\|_2^2\\
\nonumber             &\leq C\sqrt{\lambda}\e^2\|\pt\ee\|_2^2(\|\nabla\ue\|_2^2+\|\nabla\be+2\e\pt\nabla\be\|_2^2+4\e^2\|\pt\nabla\be\|_2^2)\\
\nonumber             &+\frac{h}{2\lambda}\|\Delta\ue\|_{2}^2+\frac{1}{2h}\|\Delta\be\|_2^2
\end{align}

and we conclude by choosing $h$ and $\lambda$ such that 
\begin{align}
\nonumber (IV)_{11}&\leq C \e^2\|\pt\ee\|_2^2(\|\nabla\ue\|_2^2+\|\nabla\be+2\e\pt\nabla\be\|_2^2+4\e^2\|\pt\nabla\be\|_2^2)\\
\label{eq:411}            &+\frac{1}{32}\|\Delta\ue\|_{2}^2+\frac{1}{32}\|\Delta\be\|_2^2.
\end{align}
Next we estimate the term $(IV)_{12}$, by using again H\"older inequality, Gagliardo Nirenberg inequality \eqref{eq:GN} with $p=4$ and Young inequality we have
\begin{align}
\nonumber(IV)_{12}&\leq C\e\int|\ue||\nabla\be||\pt\nabla\ee|\\
\nonumber              &\leq C\|\ue\|_{4}^2\|\nabla\be\|_{4}^2+\frac{1}{32}\e^2\|\pt\nabla\ee\|_{2}^{2}\\ 
\nonumber              &\leq C\|\ue\|_2\|\nabla\ue\|_2\|\nabla\be\|_2\|\Delta\be\|_2+\frac{1}{32}\e^2\|\pt\nabla\ee\|_{2}^{2}\\
\label{eq:412}             &\leq C\|\ue\|_2^2\|\nabla\ue\|_2^2\|\nabla\be\|_2^2+\frac{1}{32}\|\Delta\be\|_2^2+\frac{1}{32}\e^2\|\pt\nabla\ee\|_{2}^{2}.
\end{align}
Now, we consider the term $(IV)_{2}$. Again we integrate by parts to get 
\begin{equation*}
\begin{aligned}
(IV)_{2}&=-\int\e(\pt\partial_k\ee\times\be)\e\pt\partial_k\ue-\int\e(\pt\ee\times\partial_k\be)\e\partial_k\pt\ue\\
            &=(IV)_{21}+(IV)_{22}.
\end{aligned}
\end{equation*}
The term $(IV)_{21}$ is estimated by using H\"older and Young inequality as follows,
\begin{align}
\nonumber (IV)_{21}&\leq C \e\int\e|\pt\nabla\ee||\be||\nabla\pt\ue|\\
\nonumber               &\leq C\e\int\e^2|\pt\nabla\ee|^2|\be|^2+\frac{\e}{32}\|\nabla\pt\ue\|_2^2\\
\label{eq:421}              &\leq C\e\|\be\|_{\infty}^2\e^2\|\pt\nabla\ee\|_2^2+\frac{\e}{32}\|\nabla\pt\ue\|_2^2.
\end{align}
Finally, we consider the term $(IV)_{22}$
\begin{align}
\nonumber (IV)_{22}&\leq C \e^2\int|\pt\ee||\nabla\be||\nabla\pt\ue|\\
\nonumber              &\leq C\e\int\e^2|\pt\ee|^2|\nabla\be|^2+\frac{\e}{32}\|\nabla\pt\ue\|_2^2\\
\nonumber              &\leq C\e^3\|\nabla\be\|_4^2\|\pt\ee\|_{4}^2+\frac{\e}{32}\|\nabla\pt\ue\|_2^2\\
\nonumber              &\leq C\e^2\|\nabla\be\|_2\|\Delta\be\|_2\|\pt\ee\|_{2}\e\|\pt\nabla\ee\|_{2}+\frac{\e}{32}\|\nabla\pt\ue\|_2^2\\
\nonumber              &\leq C\e^4\|\pt\ee\|_2^2\|\nabla\be\|_2^2\|\Delta\be\|_2^2+\frac{\e^2}{32}\|\pt\nabla\ee\|_2^2+\frac{\e}{32}\|\nabla\pt\ue\|_2^2\\
\nonumber             &\leq C\e^2\|\pt\ee\|_2^2\e\|\nabla\be\|_2^2\e\|\Delta\be\|_2^2\\
\label{eq:422}        &+\frac{\e^2}{32}\|\pt\nabla\ee\|_2^2+\frac{\e}{32}\|\nabla\pt\ue\|_2^2,
\end{align}
where we have used as in the other terms H\"older inequality, Gagliardo Nirenberg inequality \eqref{eq:GN} with $p=4$ and Young inequality.
By using the estimates \eqref{eq:1}-\eqref{eq:422} in \eqref{eq:est32} and taking into account  the definition of $\mathcal{E}_2(t)$we get 
\begin{equation}\label{eq:est33}
\begin{aligned}
&\frac{d}{dt}\mathcal{E}_2(t)+\frac{\e^2}{4}\|\pt\nabla\ee\|_2^2+\frac{1}{4}\|\Delta\ue\|_2^2+\frac{\e}{4}\|\nabla\pt\ue\|_2^2+\frac{1}{4}\|\Delta\be\|_2^2\\
&+\frac{\e^2}{2}\|\pt\nabla\ee\|_2^2+\frac{1}{2}\|\Delta\ue\|_2^2+\frac{1}{2}\|\Delta\be\|_2^2-C\e\|\be\|_{\infty}^2\e^2\|\pt\nabla\ee\|_2^2\\
&-\frac{9}{4}\e\int|\nabla\curl(\ue\times\be)|^2-C\e\int(|\nabla((\ue\cdot\nabla)\ue))|^2\\&
-C\e\int|\nabla((\be\cdot\nabla))\be|^2\leq C \e^2\|\pt\ee\|_2^2\mathcal{E}_2(t)+\|\ue\|_2^2\|\nabla\be\|_2^2\mathcal{E}_2(t)\\
&C\e^2\|\pt\ee\|_2^2\e\|\nabla\be\|_2^2\mathcal{E}_2(t)+C(\|\nabla\ue\|_2^2+\|\nabla\be\|_2^2)\mathcal{E}_2(t)\\
&\leq C(1+\mathcal{E}_1(t))\mathcal{D}_1(t)\mathcal{E}_2(t).
\end{aligned}
\end{equation}
As in the previous Lemma we need to estimate the term with negative sign on the left-hand side of \eqref{eq:est33}. 
By using the Kato  inequality \eqref{eq:KP} we have that 
\begin{equation}
\label{eq:e29}
\begin{aligned}
&\frac{9}{4}\int|\nabla\curl(\ue\times\be)|^2+C\int(|\nabla(\dive(\ue\otimes\ue))|^2+C\int|\nabla(\dive(\be\otimes\be))|^2\\
&+C\|\be\|_{\infty}^2\e^2\|\pt\nabla\ee\|_2^2\\
&\leq C( \|\ue\be\|_{H^2}^2+\|\ue\ue\|_{H^2}^2+\|\be\be\|_{H^2}^2+\|\be\|_\infty^2\e^2\|\pt\nabla\ee\|_2^2)\\
&\leq C(\|\ue\|_{\infty}^2+\|\be\|_{\infty}^2)(\|\ue\|_{H^2}^2+\|\be\|_{H^2}^2+\e^2\|\pt\nabla\ee\|_2^2)\\
&\leq C^*(\|\ue\|_{\infty}^2+\|\be\|_{\infty}^2)\left(\frac{1}{2}\|\Delta\ue\|_{2}^2+\frac{1}{2}\|\Delta\be\|_{2}^2+\frac{\e^2}{2}\|\nabla\pt\ee\|_2^2\right).
\end{aligned}
\end{equation}
Then by using \eqref{eq:e29} in \eqref{eq:est33} we get \eqref{eq:est3} with $C_2=\sqrt{\frac{1}{C^*}}$.
\end{proof}
\section{Proof of the main theorem}
In this section we prove Theorem \ref{teo:main}. We divide the proof  in several steps.\\
\\
{\em Step 1. Construction on the initial data.}\\
\\
We set $C_3=\min\{C_1,C_2\}$. Let $(u_0, B_0)$ in $H^{s}(\T;\R^2)\times H^{s}(\T;\R^2)$ be the divergence free initial data for \eqref{eq:MHD}. We need to construct the initial data for the system \eqref{eq:mns}. By using a standard regularization argument, see for example \cite{M07}, we obtain two smooth sequences $\ue_0$ and $\be_0$. Moreover, by choosing $\e$ small enough we get  
\begin{equation*}
\|\ue_0\|_{\infty}\leq\frac{C_4}{\sqrt{\e}}\qquad \|\be_0\|_{\infty}\leq\frac{C_4}{{\sqrt{\e}}}.
\end{equation*}
with $C_4<C_3$. Then, we consider $\ue_0$ and $\be_0$ embedded in $\R^3$ by setting the third component to zero. 
The initial datum $\ee_0$ for the electric field will be constructed in two steps. First we solve  
\begin{equation}\label{eq:ide}
\curl E_0=-\partial_tB\big|_{t=0}
\end{equation}
endowed with periodic boundary conditions. We again consider $\pt B\big|_{t=0}$ as a three dimensional vector by setting the third component to zero and the value of $\partial_tB$ at time $t=0$ is obtained from the second equation of \eqref{eq:MHD}. Once \eqref{eq:ide} has been solved we construct $E_0^{\e}$ by a simple regularization argument.\\
\\
{\em Step 2. Global in time estimates for $(\ue, \be, \ee)$.}\\
\\
First of all we prove the uniform $L^{\infty}$ bounds for $\ue$ and  $\be$ required in Lemma \ref{lem:e1} and \ref{lem:e2}.
By Theorem \ref{teo:exmsn} there exists a unique smooth solution $(\ue, \be, \ee)$ of \eqref{eq:mns} starting from the initial data we have constructed in Step 1. Let $\delta<C_3-C_4$ and $T^{\e,\delta}=\min\{T^{\e,\delta}_{1}, T^{\e,\delta}_2\}$ where $T^{\e,\delta}_{i}$ are defined as follows:
\begin{equation*}
\begin{aligned}
T_{1}^{\e,\delta}&=\sup\left\{0\leq t\leq T;\,\sup_{0\leq\tau\leq t}\|\ue(\tau)\|_{\infty}\leq\frac{C_4+\delta}{2\sqrt{\e}}\right\},\\
T_{2}^{\e,\delta}&=\sup\left\{0\leq t\leq T;\,\sup_{0\leq\tau\leq t}\|\be(\tau)\|_{\infty}\leq\frac{C_4+\delta}{2\sqrt{\e}}\right\}.
\end{aligned}
\end{equation*}
We have that $T^{\e,\delta}>0$ because of the continuity in time with value in $H^{2}(\T)$ of $\ue$ and $\be$. 
We want to show that $T^{\e,\delta}=T$. If $T^{\e,\delta}<T$, then there exist $\alpha>0$ such that for all $t<T^{\e,\delta}+\alpha$
\begin{equation*}
\|\ue\|_{\infty}+\|\be\|_{\infty}< \frac{C_3}{\sqrt{\e}}.
\end{equation*}
Moreover, by using Lemma \ref{lem:e1} and Lemma \ref{lem:e2}, we get 
\begin{equation*}
\|\ue(T^{\e,\delta},\cdot)\|_{H^1}+\|\be(T^{\e,\delta},\cdot)\|_{H^1}+\sqrt{\e}\|\ue(T^{\e,\delta},\cdot)\|_{H^2}+\sqrt{\e}\|\be(T^{\e,\delta},\cdot)\|_{H^2}\leq C_5.
\end{equation*}
By using the definition of $T^{\e,\delta}$ and the Brezis-Gallouet inequality \eqref{eq:BG}, we have:
\begin{equation}
\label{eq:fin}
\begin{aligned}
\frac{C_{4}+\delta}{\sqrt{\e}}&=\|\ue(T^{\e,\delta})\|_{\infty}+\|\be(T^{\e,\delta})\|_{\infty}\\
                                           &\leq C\|\be\|_{H^1}(1+|\ln\|\ue\|_{H^{2}}|)+C\|\ue\|_{H^1}(1+|\ln\|\be\|_{H^{2}}|)\\
                                           &\leq C_6(1+|\ln\e|).
\end{aligned}
\end{equation}
where $C_6$ depends only on the initial data.  
Note that $\delta$ is a fixed number depending only on the constants $C_3$ and $C_4$. Then, there exists $\bar{\e}$ depending only on the constant $C_6$, such that for any $\e<\bar{\e}$ \eqref{eq:fin} is a contradiction. 
So we can conclude that $T^{\e,\delta}=T$ and so, by applying Lemma \ref{lem:e1} and the Lemma \ref{lem:e2}, $\ue$ and $\be$ are uniformly bounded in $C([0,T);H^{1}(\T))$, namely
\begin{equation}\label{eq:beb}
\sup_{t\in[0,T)}(\|\ue\|_{H^{1}}+\|\be\|_{H^{1}})\leq C.
\end{equation}
\\
{\em Step 3. Passage to the limit.}\\
\\
We are going to show that $(\ue,\be)$ converge to the unique global smooth solutions of \eqref{eq:MHD} with initial data \eqref{eq:ID}.
First, we note that since $T^{\e,\delta}=T$ there exists $(u^*,B^*)\in C([0,T);H^{1}(\T;\R^3)$ such that up to a subsequence the following convergences hold
\begin{equation}
\begin{aligned}
&\ue\rightarrow u^{*}\textrm{ weakly$^{*}$ in }C([0,T); H^{1}(\T)),\\
&\be\rightarrow B^{*}\textrm{ weakly$^{*}$ in }C([0,T); H^{1}(\T)).
\end{aligned}
\label{conv}
\end{equation}
Moreover, by Lemma \ref{lem:e1}, after integrating in time, and the global bound in $C([0,T);H^{1}(\T;\R^3)$ we have also 
\begin{equation}\label{eq:pb}
\e\int\|\pt\be\|_{2}^2\,\leq C.
\end{equation}
Finally, by using the Gagliardo-Nirenberg inequality, \eqref{eq:beb} and the bound on $j$ in Lemma \ref{lem:est1} we get easily that 
\begin{equation}\label{eq:eb}
\int\|\ee\|_{2}^{2}\,\leq C.
\end{equation}
Where the constants $C>0$ are independent on $\e$. 
We want to prove that $(u^{*}, B^{*})$ is a weak solution of the system \eqref{eq:MHD}. Let us multiply the first equations of \eqref{eq:main} by $\phi\in C_{c}([0,T);C^{\infty}_{per}(\T))$ with $\dive\phi=0$ and the second equations by $\psi\in C_{c}([0,T);C^{\infty}_{per}(\T))$. Specifically, from the equation for the velocity we get:    
\begin{equation*}
\begin{aligned}
\int\int-\ue\partial_t\phi+\nabla\ue\nabla\phi+((\ue\cdot\nabla)\ue\phi)+\int\ue_0\phi(x,0)&-\int\int\curl\be\times\be\phi\\
&=\int\int\e(\pt\ee\times\be)\phi,
\end{aligned}
\end{equation*}
and from the equation for the magnetic field:
\begin{equation*}
\begin{aligned}
&\int\!\!\int-\be\pt\psi+\nabla\be\nabla\psi-(\ue\times\be)\curl\psi+\int\be_0\psi(x,0)=
\int\!\!\int\e\ptt\be\psi.
\end{aligned}
\end{equation*}
By using \eqref{conv} and by using the equations \eqref{eq:mns} to get the necessary estimates in time we can easily pass to the limit in all the terms of the previous equalities except the terms on the right-hand sides. We want to prove that
\begin{equation}\label{eq:vi}
\begin{aligned}
&\e\int\int\pt\ee\times\be\phi\rightarrow 0\qquad\textrm{ as }\e\rightarrow 0,\\
&\e\int\int\ptt\be\psi\rightarrow 0\qquad\textrm{ as }\e\rightarrow 0.
\end{aligned}
\end{equation}
Let us start with the first term
\begin{align*}
&\e\int\int\pt\ee\times\be\phi=\e\int\int\pt(\ee\times\be)\phi-\e\int\int\ee\times\pt\be\phi\\
&=\e\int\ee_0\times\be_0\phi(x,0)-\e\int\int(\ee\times\be)\pt\phi-\e\int\int\ee\times\pt\be\phi.
\end{align*}
Then by using the  estimates \eqref{eq:pb}, \eqref{eq:eb}  and the uniform bounds on the initial data we get that 
\begin{equation*}
\e\left|\int\int\pt\ee\times\be\phi\right|\to 0\quad\textrm{ as }\quad\e\to 0.
\end{equation*}
Concerning the second one we have 
\begin{align*}
\e\int\int\ptt\be\phi&=-\e\int\int\pt\be\pt\phi+\e\int\pt\be_0\phi(x,0)\\
&=\e\int\int\be\ptt\phi-\e\int\be_0\pt\phi(x,0)\\
&+\e\int\pt\be_0\phi(x,0).
\end{align*}
Then, by using Lemma \ref{lem:est1} and the uniform bounds on the initial data
\begin{equation*}
\left|\e\int\int\ptt\be\phi\right| \to 0\quad\textrm{ as }\quad\e\to 0.
\end{equation*}
\\
{\em Step 4. Identification of the limit.}\\
\\
The final step of the proof is to prove that $(u^{*},B^{*})$ are the unique smooth solutions of \eqref{eq:MHD}-\eqref{eq:ID}. First we prove that $u^{*}$ and $B^{*}$ have vanishing third component because $u_0$ and $B_0$ are in $\R^{2}$. 
Let $\tilde u=(u^{*}_1,u^{*}_2)$ and $\tilde B=(B^{*}_1, B^{*}_2)$.  Since $u^*$ and $B^*$ do not depend on $x_3$ we have  that 
$\dive\tilde{u}=\dive\tilde{B}=0$ and the equations for $u^{*}_3$ and $B^{*}_3$ satisfied in the sense of distributions read as follows 
\begin{equation}\label{eq:3d}
\begin{aligned}
\pt\uu-\Delta\uu+\tilde{u}\cdot\nabla\uu-\tilde{B}\cdot\nabla\bb&=0\\
\pt\bb-\Delta\bb+\tilde{u}\cdot\nabla\bb-\tilde{B}\cdot\nabla\uu&=0
\end{aligned}
\end{equation}
Because of \eqref{conv} we can multiply the first equation by $\uu$ and the second by $\bb$. After integrating by parts and adding up we get 
\begin{equation}\label{eq:3d1}
\frac{d}{dt}\left(\int|\uu|^2+|\bb|^2 \right)+2\int|\nabla\uu|^2+2\int|\nabla\bb|^2=0,
\end{equation}
by Gronwall lemma we have that $\uu$ and $\bb$ vanish. Then, $(u^{*},B^{*})$ are a weak solutions of the Cauchy problem \eqref{eq:MHD}-\eqref{eq:ID}. By using the uniqueness result of the Theorem \ref{teo:exMHD} we get that 
$(u^{*},B^{*})=(u,B)$.


\begin{thebibliography}{10}

\bibitem{AIM} 
D.~Ars\'enio, S.~Ibrahim, N.~Masmoudi, 
\emph{A derivation of the Magnetohydrodynamic system from the Navier-Stokes-Maxwell systems}, ARMA,
\textbf{216} (3), (2015),  796--812.

\bibitem{BNP04}
Y.~Brenier, R.~Natalini, and M.~Puel, 
\emph{On a relaxation approximation of the incompressible {N}avier--{S}tokes equations}, Proc. Amer. Math. Soc.
\textbf{132} (2004), 1021--1028. 

\bibitem{BG}     
H.~Brezis and T.~Gallouet, \emph{Nonlinear Schr\"odinger evolution equations}, Nonlinear Anal., \textbf{4}, (1980), no. 4, 677-681.


\bibitem{D}
P.A.~Davidson, \emph{An Introduction to Magnetohydrodynamics}, Cambridge University Press, 2001.



\bibitem{DM10}
M.~Di Francesco, D.~Donatelli, 
\emph{Singular convergence of nonlinear hyperbolic chemotaxis systems to {K}eller-{S}egel type models},
Discrete Contin. Dyn. Syst. Ser. B, \textbf{13}, (2010), no. 1, 79--100.


\bibitem{DM02}
D.~Donatelli and P.~Marcati, 
\emph{Singular limits for nonlinear hyperbolic systems}, Evolution equations, semigroups and functional analysis (Milano,
  2000), Progr. Nonlinear Differential Equations Appl., vol.~\textbf{50}, Birkh\"auser,
  Basel, 2002, 79--96.

\bibitem{DM04}
D.~Donatelli and P.~Marcati, \emph{Convergence of singular limits
for multi-{D} semilinear hyperbolic systems to parabolic systems}, Trans. Amer. Math. Soc.,
  \textbf{356} (2004), 2093--2121 (electronic).






\bibitem{DL09}
D.~Donatelli, C.~Lattanzio, 
\emph{On the diffusive stress relaxation for multidimensional viscoelasticity},
Commun. Pure Appl. Anal., \textbf{8}, (2009), no. 2, 645--654.

\bibitem{EM1990}
A.~C.~Eringen and G.~A.~Maugin, {\em Electrodynamics of Continua: II. Fluids and Complex Media}, 
Springer, New York, 1990.


\bibitem{GIM}
P.~Germain, S.~Ibrahim and N.~Masmoudi,
\emph{Well-posedness of the {N}avier-{S}tokes-{M}axwell equations}, Proc. Roy. Soc. Edinburgh Sect. A., \textbf{144}, (2014), no. 1, 71--86.

\bibitem{JM12}
J.~Jang and N.~Masmoudi,
\emph{Derivation of {O}hm's law from the kinetic equations}, SIAM J. Math. Anal.,
\textbf{44}, (2012), no. 5, 3649--3669.


\bibitem{JL2012}
S.~Jiang and F.~Li, {\em Rigorous derivation of the compressible magnetohydrodynamic equations
from the electromagnetic fluid system}, Nonlinearity, {\bf 25},  (2012), no. 6, 1735--1752.

\bibitem{KS1986i}
S.~Kawashima and Y.~Shizuta, {\em Magnetohydrodynamic approximation of the complete equations
for an electromagnetic fluid}, Tsukuba J. Math., {\bf 10},  (1986), 131--49.

\bibitem{KS1986ii}
S.~Kawashima and Y.~Shizuta, {\em Magnetohydrodynamic approximation of the complete equations
for an electromagnetic fluid: II}, Proc. Japan Acad. Ser. A Math. Sci., {\bf 62},  (1986), 181--184.



\bibitem{MR00}
P.~Marcati and B.~Rubino, \emph{Hyperbolic to parabolic relaxation
theory for quasilinear first order systems}, J. Differential Equations, \textbf{162}
  (2000), 359--399.

\bibitem{M07}
N.~Masmoudi, \emph{Remarks about the inviscid limit of the
    {N}avier-{S}tokes system}, Comm. Math. Phys. \textbf{270}, (2007),
  777--788.

\bibitem {M10}
N.~Masmoudi, 
\emph{Global well posedness for the {M}axwell-{N}avier-{S}tokes system in 2{D}}, J. Math. Pures Appl.,
\textbf{93}, (2010), no. 6, 559--571.
      
\bibitem{ST}
M.~Sermange and R.~Temam, \emph{Some mathematical questions related to the {M}{H}{D} equations}, Comm. Pure Appl. Math., \textbf{36}, (1983), no. 5,  635-664.




\end{thebibliography}
\end{document}